\newtheorem{thm}{Theorem}[section]
\newtheorem{cor}[thm]{Corollary}
\newtheorem{prop}[thm]{Proposition}
\newtheorem{lem}[thm]{Lemma}
\theoremstyle{definition}
\newtheorem{defn}[thm]{Definition}
\newtheorem{notn}[thm]{Notation}
\theoremstyle{remark}
\newtheorem{rem}[thm]{Remark}
\newcommand{\Q}{\Bbb{Q}}
\newcommand{\OO}{\mathcal{O}}
\newcommand{\CC}{\mathcal{C}}
\newcommand{\UU}{\mathcal{U}}
\newcommand{\Z}{\Bbb{Z}}
\newcommand{\N}{\Bbb{N}}
\newcommand{\g}{\frak{g}}
\newcommand{\m}{\frak{m}}
\newcommand{\GSp}{\text{GSp}}
\newcommand{\rig}{\text{rig}}
\newcommand{\Id}{\text{Id}}
\newcommand{\Ind}{\text{Ind}}
\newcommand{\alg}{\text{alg}}
\newcommand{\Sp}{\text{Sp}}
\newcommand{\Hom}{\text{Hom}}
\newcommand{\Lie}{\text{Lie}}
\newcommand{\im}{\text{im}}
\newcommand{\GL}{\text{GL}}
\newcommand{\SL}{\text{SL}}
\newcommand{\y}{\hspace{6pt}}
\let\c@equation\c@thm
\numberwithin{equation}{section}
\title{Rigid vectors in $p$-adic principal series representations}
\author{Aranya Lahiri and Claus Sorensen}
\date{}
\begin{document}

\begin{abstract}
In this paper we view pro-$p$ Iwahori subgroups $I$ as rigid analytic groups $\Bbb{I}$ for large enough $p$. This is done by endowing $I$ with a natural $p$-valuation, and thereby
generalizing results of Lazard for $\GL_n$.  
We work with a general connected reductive split group over some $p$-adic field (with simply connected derived group) and study the $\Bbb{I}$-analytic vectors in principal series representations. Our main result is an irreducibility criterion which generalizes results of Clozel and Ray in the $\GL_n$-case.
\end{abstract}

\maketitle



\onehalfspacing

\section{Introduction}

In this article we study rigid analytic vectors in continuous $p$-adic prinicpal series representations of a $p$-adic reductive group $G$. Here $G={\bf{G}}(L)$ is the locally profinite group of $L$-rational points of a connected reductive group ${\bf{G}}$ defined over a finite extension $L/\Q_p$. We assume ${\bf{G}}$ is split over $L$, and that the derived group 
${\bf{G}}^{\text{der}}$ is simply connected. Once and for all we choose a Borel subgroup ${\bf{P}}$ over $L$, and a maximal $L$-split torus ${\bf{T}}\subset {\bf{P}}$. Let 
$P={\bf{P}}(L)$ and $T={\bf{T}}(L)$ -- similarly for other linear algebraic groups ${\bf{H}}$ over $L$ we will write $H$ for the group of $L$-points. 

The continuous principal series representations of $G$ are defined as follows, see \cite[Ch.~2]{Sch06}. Start with a continuous character $\chi: T \rightarrow K^{\times}$ taking values in some finite extension $K \supset L$. Inflate $\chi$ to a character of $P$ by letting $\chi=1$ on the unipotent radical as usual, and look at the induction in the space 
$C(G,K)$ of continuous $K$-valued functions on $G$:
$$
\Ind_{P}^G(\chi)=\{f \in C(G,K): f(bg)=\chi(b)f(g)\y \forall b \in P\}.
$$
(Our normalization is different from \cite{Sch06} where $G$ acts by left translations. In our setup $G$ acts by right translations, which means $(g'f)(g)=f(gg')$ for $g,g'\in G$.) One turns $\Ind_{P}^G(\chi)$ into an admissible 
$K$-Banach representation by endowing it with the norm $\|f\|=\sup_{g \in C}|f(g)|$ for some choice of maximal compact subgroup $C \subset G$ for which the Iwasawa decomposition 
$G=PC$ holds.

In his paper \cite{Sch06} Schneider expresses optimism about $\Ind_{P}^G(\chi)$ always having a finite composition series, and he states a precise (topological) irreducibility conjecture 
\cite[Conj.~2.5]{Sch06} -- at least for semisimple and simply connected groups ${\bf{G}}$, in which case the condition on the character is $\chi^{-1}\delta \circ \alpha^{\vee}\notin (\cdot)^{\Z_{>0}}$ for all ${\bf{P}}$-positive roots $\alpha$; that is those $\alpha$ appearing in $\Lie({\bf{P}})$ (and $\delta=\frac{1}{2}\sum_{\alpha>0}\alpha$ as usual). 

The locally analytic analogue of Schneider's irreducibility conjecture was proved by Orlik and Strauch in \cite{OS10}, as a culmination of work by Feaux de Lacroix, Frommer, Kohlhaase, Schneider, Teitelbaum, and others. The actual conjecture is known to be true for $\GL_2(\Q_p)$ but it remains open in general. Ban and Hundley proved the conjecture for
$\chi$ in a certain cone, see \cite[Thm.~1.1]{BH16}.

In this article we restrict $\Ind_{P}^G(\chi)$ to a pro-$p$ Iwahori subgroup $I$ (for $p>\!\!>0$ large enough) and investigate the subspace of rigid analytic vectors 
$\Ind_{P}^G(\chi)_{\Bbb{I}-\text{an}}$ as defined in \cite[Df.~3.3.1]{Eme17}. Here $\Bbb{I}$ is a rigid analytic $\Q_p$-group with $\Bbb{I}(\Q_p)=I$, and as a rigid analytic space $\Bbb{I}$ is a closed ball. 

As a trivial first step the Bruhat-Tits decomposition gives an isomorphism of $I$-representations
$$
\Ind_{P}^G(\chi)_{\Bbb{I}-\text{an}} \overset{\sim}{\longrightarrow} \bigoplus_{w\in W} \Ind_{I \cap wPw^{-1}}^I(w\chi)_{\Bbb{I}-\text{an}} 
$$
where $W=W({\bf{G}},{\bf{T}})$ denotes the Weyl group and $(w\chi)(t)=\chi(w^{-1}tw)$ for $w \in W$. The main problem is to understand each summand $\Ind_{I \cap wPw^{-1}}^I(w\chi)_{\Bbb{I}-\text{an}}$, which by \cite[Prop.~3.3.7]{Eme17} agrees with the rigid analytic induction of $w\chi$ (the space of rigid functions $f:I \rightarrow K$ satisfying the usual transformation property; see (\ref{ind}) in section \ref{slp} plus section \ref{ps} in the main text for more details). Of course, for all these spaces to be non-trivial we must assume the character $\chi$ is rigid analytic when restricted to $T^1=I\cap T$. This means the function $\chi: T^1 \rightarrow K^{\times}\hookrightarrow K$ lies in the space $\CC^{\rig}(T^1,K)$
introduced in definition \ref{rigf}.

The first half of our paper is concerned with promoting $I$ to a rigid analytic group $\Bbb{I}$. Our convention will be that our choice of $I$ corresponds to the 
${\bf{P}}$-negative roots. (This part is potentially confusing; to aid the reader we point out that $P$ is being denoted by $B^-$ in the main body of the text.) See (\ref{iwahori}).
 We show that $I$ admits a natural $p$-valuation $\omega$ provided $p-1>e(L/\Q_p)h_G$ where $h_G$ is the Coxeter number of ${\bf{G}}$ and $e(L/\Q_p)$ is the ramification index;
 see Proposition \ref{char}. Moreover 
 $(I,\omega)$ is saturated, which is what allows us to associate a rigid analytic group $\Bbb{I}$ with $\Q_p$-points $I$. This is presumably well-known but not easily accessible in the literature. We give the details in Section \ref{analytic}. We should emphasize there is some overlap between our work and \cite{CR18}, \cite{Ray20a}. In \cite{CR18} Cornut and Ray give a minimal set of generators for $I$ in any split reductive group ${\bf{G}}$, but they do not directly use $p$-valuations. In \cite{Ray20a} Ray gives an ordered basis for $(I,\omega)$ in the case of $\SL_n(\Z_p)$ and $\GL_n(\Z_p)$. He is working with $\omega$ as defined in \cite[Ex.~23.3]{Sch11} which leads to massive matrix calculations. Part of our original motivation for writing this paper was to give a more conceptual approach to $(I,\omega)$ and its ordered bases for any Chevalley group. 

Having defined the rigid analytic pro-$p$ Iwahori group $\Bbb{I}$ for $p-1>e(L/\Q_p)h_G$ it now makes sense to ask for the $\Bbb{I}$-analytic vectors of any $K$-Banach representation of $I$. Here is our main result for principal series representations, which gives an irreducibility criterion analogous to \cite[Thm.~3.4.12]{OS10} in the rigid setting. 

\begin{thm}\label{main}
Let $G$ be a $p$-adic reductive group over $L$ as above. Assume $p-1>e(L/\Q_p)h_G$. Let $\chi: T \rightarrow K^{\times}$ be a continuous character which is rigid analytic when restricted to $T^1=I \cap T$, and consider the Verma module $\mathcal{V}_{d\chi}$ associated with its derivative $d\chi$ (see Section \ref{verma} for more details). Then 
$\mathcal{V}_{d\chi}$ is simple if and only if the $I$-representation
$$
\Ind_{I \cap wPw^{-1}}^I(w\chi)_{\Bbb{I}-\text{an}} 
$$
is topologically irreducible for some $w \in W$ -- or equivalently all $w \in W$. If so, these representations are mutually non-isomorphic as $w$ varies. 
\end{thm}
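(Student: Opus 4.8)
The plan is to reduce the statement about the Banach-space (really rigid-analytic) induction $\Ind_{I \cap wPw^{-1}}^I(w\chi)_{\Bbb{I}-\an}$ to a statement purely about the Verma module $\mathcal{V}_{d\chi}$, by passing through the distribution algebra $D(\Bbb{I},K)$ of the rigid analytic group $\Bbb{I}$. The first step is to identify $D(\Bbb{I},K)$: since $(I,\omega)$ is saturated (Proposition \ref{char} and Section \ref{analytic}), $D(\Bbb{I},K)$ is a Fréchet–Stein algebra whose "infinitesimal part'' is a completion of $U(\g)\otimes_L K$ along the PBW/norm filtration coming from the ordered basis of $(I,\omega)$, and whose "group-like part'' involves the analytic functions on $\Bbb{T}^1=\Bbb{I}\cap\Bbb{T}$. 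The rigid induction $\Ind_{I \cap wPw^{-1}}^I(w\chi)_{\Bbb{I}-\an}$ is, by \cite[Prop.~3.3.7]{Eme17} and the coadmissibility yoga, the continuous dual of a cyclic $D(\Bbb{I},K)$-module of the form $D(\Bbb{I},K)\otimes_{D(\Bbb{I}\cap wPw^{-1},K)} (w\chi)$; topological irreducibility of the representation is then equivalent to simplicity of this module. I would first do the $w=1$ case (where $\Bbb{I}\cap wPw^{-1}=\Bbb{I}\cap\Bbb{P}$ is the "lower-triangular'' factor) and then observe that conjugation by $w$ and the substitution $\chi\mapsto w\chi$, $d\chi\mapsto w(d\chi)$ carries one case to another — this gives the equivalence "for some $w$ $\Leftrightarrow$ for all $w$'' essentially for free, since the Verma module $\mathcal{V}_{d\chi}$ and $\mathcal{V}_{w(d\chi)}$ are abstractly isomorphic as $U(\g)$-modules up to relabeling of the Borel.

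The heart of the argument is then the equivalence, for fixed $w$ (say $w=1$), between simplicity of the induced $D(\Bbb{I},K)$-module and simplicity of $\mathcal{V}_{d\chi}$. In one direction: a nonzero proper $U(\g)$-submodule of $\mathcal{V}_{d\chi}$ should "complete'' to a nonzero proper $D(\Bbb{I},K)$-submodule of the analytic induction, because $U(\g)\otimes_L K$ is dense in (the relevant completion inside) $D(\Bbb{I},K)$ and the Verma module is recovered as the subspace of locally-finite or "algebraic'' vectors — here I would invoke the Amice/Emerton-style description of the analytic induction as a completed tensor product and the flatness of the relevant Fréchet–Stein transition maps to see that extension of scalars from $U(\g)\otimes_L K$ to $D(\Bbb{I},K)$ is exact and faithful on the category of modules that arise. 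In the other direction, a closed $D(\Bbb{I},K)$-submodule intersects the dense algebraic vectors (the Verma module) in a $U(\g)$-submodule that must be $0$ or everything by simplicity of $\mathcal{V}_{d\chi}$, and by density this forces the closed submodule to be $0$ or everything. The non-isomorphism of the $\Ind_{I \cap wPw^{-1}}^I(w\chi)_{\Bbb{I}-\an}$ for distinct $w$ follows by computing central characters / highest weights: the $T^1$-action (equivalently the action of the "torus part'' of $D(\Bbb{I},K)$) on the natural line in each summand is $w\chi$, and distinct $w$ give distinct $w\chi$ generically — when they don't, one separates them using the infinitesimal character $w(d\chi+\delta)-\delta$ together with the $\Bbb{I}$-analyticity, exactly as in the locally analytic picture of \cite[Thm.~3.4.12]{OS10}.

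The step I expect to be the main obstacle is making precise the dictionary "analytic induction $\leftrightarrow$ coadmissible $D(\Bbb{I},K)$-module'' with enough control to run the density-and-exactness argument. Concretely, one needs: (i) that the completion of $U(\g)\otimes_L K$ sitting inside $D(\Bbb{I},K)$ behaves like the "$\Bbb{I}$-analytic distribution algebra'' in Emerton's sense, which requires the ordered-basis and $p$-valuation input from the first half of the paper (this is why the hypothesis $p-1>e(L/\Q_p)h_G$ is in force); (ii) that the functor $M\mapsto$ (its continuous dual, as an $\Bbb{I}$-representation) is exact and reflects irreducibility on the subcategory generated by the relevant parabolic inductions — this is where one must be careful, since in general passing to duals only reverses inclusions of \emph{closed} submodules, so one needs coadmissibility to guarantee all submodules in sight are closed; and (iii) checking that the "algebraic vectors'' in $\Ind_{I \cap wPw^{-1}}^I(w\chi)_{\Bbb{I}-\an}$ really do form precisely $\mathcal{V}_{w\chi}^{\vee}$ (or $\mathcal{V}_{w\chi}$, depending on the side), including the subtlety that $d\chi$ need not be dominant or even "algebraic'' in the classical sense, so $\mathcal{V}_{d\chi}$ is a Verma module over $\g\otimes_L K$ with an a priori arbitrary (non-integral) highest weight. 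Once (i)–(iii) are in place, the remaining bookkeeping — Weyl-group equivariance, central characters, and the non-isomorphism claim — is routine and parallels \cite{OS10} and the $\GL_n$ arguments of Clozel and Ray.
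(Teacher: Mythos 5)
There is a genuine gap, and it is precisely the one the authors of the paper warn about. Your whole strategy runs through the dictionary ``rigid induction $\leftrightarrow$ coadmissible $D(\Bbb{I},K)$-module,'' and you assert at the outset that $D(\Bbb{I},K)$ is a Fr\'echet--Stein algebra. In the \emph{rigid} (as opposed to locally analytic) setting this is false: $\mathcal{D}^{\text{an}}(\Bbb{I},K)$ is the continuous dual of the Banach space $\CC^{\text{an}}(\Bbb{I},K)$, hence itself a Banach algebra, and it is \emph{non-Noetherian} in general (this is recorded in the appendix to Clozel's paper \cite{Clo18} and cited in the introduction of the present paper). With no Noetherianity and no Fr\'echet--Stein structure, there is no notion of coadmissibility, no guarantee that algebraic submodules are closed, no exactness of the duality functor on a reasonable subcategory, and no way to run the density-and-exactness argument you sketch in your steps (i)--(iii). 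You flagged (ii) as a worry, but the issue is upstream of it: the whole duality framework of Schneider--Teitelbaum/\cite{OS10} is simply unavailable here. The authors say explicitly that they tried this route and it failed, and that the rigid case ``really does seem to require a direct proof along the lines of Clozel and Ray rather than passing to duals.''

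The paper's actual argument runs in the opposite direction. After restricting to $U_w^+$ and choosing coordinates via the ordered Lazard basis, they analyze the $\frak{t}_0$-action on $C^{\rig}(U_w^+,K)$ eigenmonomial by eigenmonomial, introduce a Hida-style idempotent $\lim_{n\to\infty}\UU_s^{n!}$ built from a single cocharacter $H_\mu$ adapted to $w$, and deduce a slope decomposition that forces any nonzero closed $T_0^1U_w^+$-invariant subspace to contain the constant function $\tilde f_0$. The Verma module $\mathcal{V}_{d(w\chi)}$ is then mapped \emph{into} the induction itself (not into its dual) by $X\mapsto X\tilde f_0$; one checks this map is injective onto the dense subspace of $\frak{t}_0$-finite (polynomial) vectors by a dimension count of weight spaces, and irreducibility follows by density. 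The converse uses the elementary observation that a proper $U(\frak t)$-submodule of the polynomial functions stays proper after closure (their (\ref{ray1})--(\ref{ray2})). Finally, the mutual non-isomorphism for distinct $w$ is not established via central or infinitesimal characters as you propose; it is obtained from Frobenius reciprocity together with the non-existence of a translation-invariant continuous linear form on $C^{\rig}(\Z_p,K)$ --- the statement that the rigid functions on $\Z_p$ admit no Haar functional --- applied to a root subgroup $I\cap U_\beta$ with $\beta\in w\Phi^+\cap w'\Phi^-$. So both the equivalence and the multiplicity-one part of your proposal rest on machinery that does not exist in this setting.
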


There is a standard criterion for the simplicity of $\mathcal{V}_{d\chi}$ due to Bernstein-Gelfand-Gelfand, see \cite[Thm.~7.6.24]{Dix96}. It is the infinitesimal version of the criterion in the irreducibility conjecture discussed above. In Section \ref{symp} we work out the irreducibility criterion explicitly in the case of $\Sp_4(\Q_p)$ for $p>5$. 

The $\GL_2(\Q_p)$-case of Theorem \ref{main} is \cite[Thm.~3.7]{Clo18}, which continued the line of research begun by Robert in \cite{Rob84} and \cite{Rob85}. As part of his thesis Ray extended Clozel's argument to $\GL_n(\Q_p)$ and $w=1$. This is the main result of his paper \cite[Thm.~3.9]{Ray20b}. In the last paragraph of that paper Ray expresses hope that the proof can be adapted to the case $w\neq 1$. 
This is essentially what we do. By reworking the argument of Clozel and Ray, and setting it up in a more streamlined fashion, we are able to avoid the heavy matrix calculations in \cite{Ray20b} and deal with the case $w\neq 1$ with very little extra effort. Here are some of the main differences between \cite{Ray20b} and this article:
\begin{itemize}
\item The coordinates in \cite{Ray20b} are given by the matrix entries (ordered lexicographically). This seems a bit artificial to us, and it results in cumbersome matrix calculations. Instead, for an arbitrary split group, our coordinates arise from a natural ordered basis in the sense of Lazard. This lets us argue more conceptually. 
\item \cite[Lem.~3.4]{Ray20b} gives an explicit Iwahori factorization. We do not need to be explicit. In Remark \ref{promote} we explain how the Iwahori factorization (as $p$-adic manifolds say) can be boosted to an isomorphism of rigid analytic spaces -- without explicitly inverting the multiplication map.
\item The proof in \cite[p.~479]{Ray20b} juggles with $n$ operators  $T_1,\ldots, T_n$ corresponding to the $n$ cocharacters $x \mapsto \text{diag}(1,\ldots,x,\ldots,1)$. We work with a single operator $H_{\mu}$ corresponding to cocharacter $\mu$ adapted to the choice of Weyl element $w$ in the sense that $\langle w\alpha,\mu\rangle>0$ for all positive roots $\alpha$. In the $\GL_n$-case, for $w=1$ our cocharacter $\mu$ can be taken to be $x \mapsto \text{diag}(1,x,x^2, \ldots,x^{n-1})$ for instance. 
\end{itemize}

Our interest in Theorem \ref{main} stems primarily from the globally analytic variant of base change introduced by Clozel in \cite{Clo17} and \cite{Clo18}. In smooth representation theory, base change is reflected by a homomorphism between Hecke algebras. Clozel instead defines a homomorphism between globally analytic distribution algebras such as 
$\mathcal{D}^{\text{an}}(\Bbb{I},K)$ in the notation of \cite[Cor.~5.1.8]{Eme17}, and he seems to suggest that this map should be compatible with classical Langlands base change -- at least for unramified extensions.

As already noted, our Theorem \ref{main} bears a close resemblance to \cite[Thm.~3.4.12]{OS10} in the locally analytic setup. The latter asserts that the strong dual of the locally analytic induction $\big(\Ind_{I \cap wPw^{-1}}^I(w\chi)_{\text{la}}\big)_b'$ is a simple $\mathcal{D}^{\text{la}}(I,K)$-module if $\mathcal{V}_{d\chi}$ is simple. This more or less immediately gives the irreducibility of the induction by employing the duality theory of Schneider and Teitelbaum established in \cite{ST02}, \cite{ST03}. There does not seem to be a good duality theory in the rigid analytic setup. For instance, as pointed out in \cite[App.~A.2]{Clo18} the distribution algebras $\mathcal{D}^{\text{an}}(\Bbb{I},K)$ are non-Noetherian in general, which renders the Fr\'{e}chet-Stein techniques of \cite{ST03} useless in the rigid analytic setup. We tried unsuccessfully to mimic the approach of \cite{OS10}, but Theorem \ref{main} really does seem to require a direct proof along the lines of Clozel and Ray rather than passing to duals.  

 \section{Notation}
 
 Throughout the paper the following notation will remain in force. Let $p>2$ be a prime. We work over a fixed finite extension $L/\Q_p$ with valuation ring $\OO$ and maximal ideal 
 $\m=(\varpi)\subset \OO$. The absolute ramification index $e(L/\Q_p)$ will just be denoted by $e$. We let $\ell=[L:\Q_p]$. 
 
 We fix a connected reductive group ${\bf{G}}$ which is split over $L$, and choose an $L$-split maximal torus ${\bf{T}}\subset {\bf{G}}$. Let 
 $(X^*({\bf{T}}), \Phi, X_*({\bf{T}}), \Phi^{\vee})$ be the associated root datum. Once and for all we pick a system of positive roots $\Phi^+$ with simple roots 
 $\Delta \subset \Phi^+$ and let $\Phi^-=-\Phi^+$. We let $\text{ht}(\cdot)$ be the height function on $\Phi$, and we recall the 
 Coxeter number of ${\bf{G}}$ is $h=1+\text{ht}(\theta)$ where $\theta$ is the highest root. To each $\alpha\in \Phi$ there is attached a unipotent root subgroup 
 ${\bf{U}}_{\alpha}$ normalized by ${\bf{T}}$. 
 
 We are mostly interested in representations of the $p$-adic group $G={\bf{G}}(L)$ with the usual locally profinite topology. Similarly $T={\bf{T}}(L)$ and we have the maximal compact subgroup $T^0$ and its Sylow pro-$p$-subgroup $T^1$. We let $U_{\alpha}={\bf{U}}_{\alpha}(L)$ and note that there is an isomorphism 
 $u_{\alpha}: L \overset{\sim}{\longrightarrow} U_{\alpha}$ such that $tu_{\alpha}(x)t^{-1}=u_{\alpha}(\alpha(t)x)$ for all $t \in T$ and $x \in L$. This gives a filtration of $U_{\alpha}$ by the subgroups $U_{\alpha,r}=u_{\alpha}(\m^r)$ for varying $r \in \Z$.

We consider the full Iwahori subgroup $J$ corresponding to $\Phi^+$. Multiplication defines a homeomorphism 
$$
\prod_{\alpha\in \Phi^-}U_{\alpha,1} \times T^0 \times \prod_{\alpha\in \Phi^+}U_{\alpha,0} \overset{\sim}{\longrightarrow} J.
$$
Our main interest however is the pro-$p$ Iwahori subgroup $I$ which is the Sylow pro-$p$ subgroup of $J$. Again multiplication gives a homeomorphism
\begin{equation}\label{iwahori}
\prod_{\alpha\in \Phi^-}U_{\alpha,1} \times T^1 \times \prod_{\alpha\in \Phi^+}U_{\alpha,0} \overset{\sim}{\longrightarrow} I.
\end{equation}
Here the products over $\Phi^-$ and $\Phi^+$ are ordered in an arbitrarily chosen way. For the latter see the proof of \cite[Lem.~2.3]{OS19} and Lemma 2.1, part i, in \cite{OS19}.

\section{A $p$-valuation on pro-$p$ Iwahori}

In this section we will define a $p$-valuation $\omega$ on $I$ (in the sense of Lazard) provided $p-1>eh$ and exhibit an ordered basis for $(I,\omega)$. The strategy is to emulate \cite[Ex.~23.3]{Sch11}
and conjugate $I$ into a small enough principal congruence subgroup of ${\bf{G}}(E)$ for a large enough finite extension $E/L$. This approach is based on the following well-known observation.

\begin{lem}
Let $E/\Q_p$ be a finite extension and suppose $\mathcal{G}$ is an $\OO_E$-subgroup scheme of $\GL_N$ for some $N$. For $r=1,2,3,\ldots$ consider the congruence subgroup
 $K_r$ defined as 
$$
K_r=\ker\big(\mathcal{G}(\OO_E)\longrightarrow \mathcal{G}(\OO_E/\m_E^r)\big).
$$ 
Then for $r>e_E/(p-1)$ the expression
$$
\omega(g)=\frac{1}{e_E}\cdot \sup\{n \in \N: g \in K_n\}
$$ 
defines a $p$-valuation on $K_r$. (Here $e_E=e(E/\Q_p)$ and $\OO_E$ denotes the valuation ring of $E$; by convention $\omega(1)=\infty$.) 
\end{lem}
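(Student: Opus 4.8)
The plan is to verify directly the axioms of a $p$-valuation (in the sense of Lazard, as in \cite[Ch.~23]{Sch11}) for the function $\omega$ on $K_r$. Recall these amount to: (i) $\omega(g)>\tfrac{1}{p-1}$ for all $g\neq 1$; (ii) $\omega(gh^{-1})\geq \min\{\omega(g),\omega(h)\}$; (iii) $\omega([g,h])\geq \omega(g)+\omega(h)$; and (iv) $\omega(g^p)=\omega(g)+1$. The source of all four is the standard congruence-subgroup calculus inside $\mathrm{GL}_N(\OO_E)$: writing an element of $K_n$ as $1+A$ with $A$ having entries in $\m_E^n$, one has $\omega(g)\geq n \iff g\in K_n \iff A\equiv 0 \pmod{\m_E^n}$, and I would reduce everything to estimates on such $A$'s. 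The hypothesis $r>e_E/(p-1)$ is exactly what forces $\omega\geq r>\tfrac{1}{p-1}$ on all nontrivial elements, giving (i); it is also what keeps us in the range where $\log$ and $\exp$ converge $p$-adically, which is the conceptual reason the axioms hold.

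First I would establish (ii) and (iii) by elementary matrix manipulation. For (ii): if $g=1+A$, $h=1+B$ with $A\in M_N(\m_E^m)$, $B\in M_N(\m_E^n)$, $m=e_E\omega(g)$, $n=e_E\omega(h)$, then $h^{-1}=1+B'$ with $B'\in M_N(\m_E^{n})$ (geometric series, valid since $B$ is topologically nilpotent), and $gh^{-1}=1+A+B'+AB'$, whose non-identity part lies in $M_N(\m_E^{\min(m,n)})$; one must note that $K_r$ is a subgroup so $gh^{-1}\in K_r$ and the computation makes sense. For (iii): $[g,h]-1 = g h g^{-1} h^{-1} - 1$ expands, after a short computation, to something congruent to $AB-BA$ modulo $\m_E^{m+n+\min(m,n)}$, hence lies in $M_N(\m_E^{m+n})$, giving $\omega([g,h])\geq \omega(g)+\omega(h)$. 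These are routine and I would not belabor them.

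The main obstacle is axiom (iv), $\omega(g^p)=\omega(g)+1$, and in particular the inequality $\omega(g^p)\geq \omega(g)+1$ together with the matching upper bound $\omega(g^p)\leq\omega(g)+1$ (equivalently $g^p\in K_{n+e_E}\setminus K_{n+e_E+1}$ when $g\in K_n\setminus K_{n+1}$). Write $g=1+A$ with $v(A):=\min$ valuation of entries $=n$, assumed $> e_E/(p-1)$. Then $g^p=(1+A)^p = 1 + \sum_{k=1}^{p}\binom{p}{k}A^k$. The term $k=1$ contributes $pA$, of valuation $e_E + n$; the terms $2\leq k\leq p-1$ have $v(\binom{p}{k}A^k)\geq e_E + kn \geq e_E + 2n > e_E+n$; and the term $k=p$ is $A^p$ of valuation $pn = n + (p-1)n > n + e_E$ precisely by the hypothesis $n>e_E/(p-1)$. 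Hence every term after $pA$ has strictly larger valuation than $pA$, so $v(g^p-1)=e_E+n$ exactly, i.e. $\omega(g^p)=\omega(g)+1$. This is the one place where the numerical hypothesis $r>e_E/(p-1)$ is genuinely used twice — once to get started and once to kill $A^p$ — and it is the crux of the argument; the rest is bookkeeping. Finally, I would remark that $\omega$ takes values in $\tfrac{1}{e_E}\Z$, is discrete, and that the filtration $\{K_n\}$ is separated ($\bigcap_n K_n = 1$ since $\bigcap_n \m_E^n = 0$), so $\omega$ is well-defined and finite on $K_r\setminus\{1\}$ as claimed.
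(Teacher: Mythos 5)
Your verification of the four Lazard axioms is correct and complete, but it follows a different route from the paper: the paper simply cites \cite[Ex.~23.2]{Sch11} for the $\GL_N$-case (where $\omega(g)=\min_{i,j}v(a_{ij}-\delta_{ij})$ with $v(p)=1$, which the authors observe is equivalent to their definition) and then remarks that the subgroup-scheme case follows from $K_r=\mathcal{G}(\OO_E)\cap\tilde K_r$, since the axioms are inherited by restriction to a subgroup. Your argument re-proves that example from scratch. Both are valid; the citation is shorter, while your derivation makes the mechanism transparent --- in particular it isolates precisely where the bound $r>e_E/(p-1)$ enters, namely in (i) for lower-boundedness and in (iv) to force $v(A^p)=pn>e_E+n$, which is the genuine content and worth seeing spelled out. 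Your estimates for (ii), (iii) and the $k=1,\dots,p$ term-by-term analysis in (iv) are all sound; the observation that all terms other than $pA$ have strictly larger valuation so the ultrametric pins down $v(g^p-1)=e_E+n$ exactly is the key point and you state it correctly. One small normalization slip: in justifying (i) you write ``forces $\omega\geq r>\tfrac{1}{p-1}$,'' but of course $\omega(g)=n/e_E$ with $n\geq r$, so the correct chain is $\omega(g)\geq r/e_E>\tfrac{1}{p-1}$; this is clearly what you meant and does not affect the argument. You might also add one sentence noting explicitly that for a subgroup scheme $\mathcal{G}\hookrightarrow\GL_N$ one has $K_r(\mathcal{G})=\mathcal{G}(\OO_E)\cap K_r(\GL_N)$, so your matrix estimates inside $\GL_N(\OO_E)$ apply verbatim; this is implicit in your setup but is the one reduction step the paper makes explicit.
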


\begin{proof}
The $\GL_N$-case is \cite[Ex.~23.2]{Sch11}. There $\omega$ is defined by $\omega(g)=\min_{i,j} v(a_{ij}-\delta_{ij})$ for a matrix $g=(a_{ij})$ where $v$ is the additive valuation on
$E$ with $v(p)=1$, but this is easily checked to agree with our definition. For a subgroup scheme $\mathcal{G}\hookrightarrow \GL_N$ just note that $K_r=\mathcal{G}(\OO_E)\cap \tilde{K}_r$ where $\tilde{K}_r$ denotes the $r$th congruence subgroup of $\GL_N$. 
\end{proof}

We will apply this to the following $\mathcal{G}$. As in \cite[Sect.~2.1.6]{OS19} we fix a hyperspecial vertex $x_0$ in the apartment of the semisimple Bruhat-Tits building of $G$ corresponding to ${\bf{T}}$. We consider the associated smooth affine group scheme $\mathcal{G}_{x_0}$ over $\OO$ and its identity component $\mathcal{G}_{x_0}^{\circ}$.
As discussed in \cite[Sect.~7.2.2]{OS19} the $r^{\text{th}}$ congruence subgroup of $\mathcal{G}_{x_0}^{\circ}$ is generated by all the $U_{\alpha,r}$ for varying $\alpha \in \Phi$, and a subgroup $T^r\subset T$ defined in terms of the Neron model of ${\bf{T}}$. See \cite[Prop.~7.9]{OS19}. 

Instead of $\mathcal{G}_{x_0}^{\circ}$ we work with the base change $\mathcal{G}=\mathcal{G}_{x_0}^{\circ} \times_{\text{Spec}(\OO)}\text{Spec}(\OO_E)$, where $E/L$ is a yet unspecified finite extension. As above we let $K_r$ denote the $r^{\text{th}}$ congruence subgroup of $\mathcal{G}(\OO_E)$. 

\begin{lem}\label{Et}
Assume $p-1>eh$. Then there exists a pair $(E,t)$ consisting of a finite extension $E/L$ and an element $t \in {\bf{T}}(E)$ such that $tIt^{-1}\subset K_r$ where 
$r$ is the smallest integer $>e_E/(p-1)$. 
\end{lem}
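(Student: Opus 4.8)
The plan is to make the conjugating element $t$ scale each root subgroup just enough to push $I$ into the deep congruence subgroup $K_r$. Recall from (\ref{iwahori}) that $I$ factors as a product of the $U_{\alpha,1}$ for $\alpha \in \Phi^-$, the group $T^1$, and the $U_{\alpha,0}$ for $\alpha \in \Phi^+$. The relation $tu_{\alpha}(x)t^{-1}=u_{\alpha}(\alpha(t)x)$ shows that conjugation by $t \in {\bf{T}}(E)$ sends $U_{\alpha,k}$ into $U_{\alpha,k+v_E(\alpha(t))}$, where $v_E$ is the valuation on $E$ normalized so that $v_E(\varpi_E)=1$. So the whole point is to choose $t$ with $v_E(\alpha(t))$ large and of the correct sign on each root: we need it to be at least $r-1$ on the positive roots (which sit at level $0$ in $I$) and at least $r$ on the negative roots (already at level $1$), while simultaneously conjugating $T^1$ into the congruence subgroup $T\cap K_r$.

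First I would pick a strictly dominant cocharacter. Since ${\bf{G}}$ is split, $X_*({\bf{T}})$ contains a cocharacter $\lambda$ with $\langle\alpha,\lambda\rangle=\text{ht}(\alpha)$ for every $\alpha\in\Phi$ (take $\lambda=2\delta^{\vee}$, the sum of the fundamental coweights, or any cocharacter pairing to the height), so in particular $\langle\alpha,\lambda\rangle\geq 1$ for $\alpha\in\Phi^+$ and $\leq -1$ for $\alpha\in\Phi^-$, with the extreme value $\pm(h-1)$ attained on $\pm\theta$. Then I would set $t=\lambda(\varpi_E^{m})$ for a large extension $E/L$ and a suitable integer $m\geq 1$, so that $v_E(\alpha(t))=m\langle\alpha,\lambda\rangle$. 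For $\alpha\in\Phi^+$ this is $\geq m$, sending $U_{\alpha,0}$ into $U_{\alpha,m}$; for $\alpha\in\Phi^-$ this is $\leq -m(h-1)$ in the worst case, so $U_{\alpha,1}$ lands in $U_{\alpha,1-m(h-1)}$ — and here is the catch: to keep things inside $K_r$ we cannot let any root subgroup slip to a negative level, so we need $1-m(h-1)\geq r$, i.e. $m(h-1)\leq 1-r<0$, which is impossible with $m\geq 1$. The fix is to not use a single cocharacter but to allow $t$ to act with opposite signs appropriately — concretely, the natural choice is $t = \lambda(\varpi_E^{-m})$ is also wrong for the same reason on the other side. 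The genuinely correct device, which I would adopt, is to work with $\mathcal{G}_{x_0}^\circ$ over $\OO$ rather than over $\OO_E$ and to \emph{first} enlarge $L$ to $E$ so that the level $r\sim e_E/(p-1)$ is comparable to the ramification we introduce: we choose $E/L$ ramified of large degree $e(E/L)=N$, so $e_E=eN$ and $r\approx eN/(p-1)+1$, and then take $t=\lambda(\varpi_E)$ raised to a power that is a fixed multiple of $r$ — but crucially we translate the whole Iwahori by the \emph{Weyl-conjugate} structure so that all root subgroups start at level $\geq$ something we control. The cleanest honest statement: conjugating by $t=\lambda(\varpi_E^{a})$ moves $U_{\alpha,k}\subset U_{\alpha,k+a\,\text{ht}(\alpha)}$; choosing $E$ with $e(E/L)$ divisible by $h-1$ and then $a$ so that $a\geq r$ while the negative roots, starting at level $1$ but getting multiplied by a \emph{negative} $\text{ht}$, are compensated by instead centering the $p$-valuation $\omega$ of Lemma \ref{char} — in other words the honest route is: it suffices that $tIt^{-1}\subseteq K_r$ after replacing $\omega$-normalization, and one verifies this root subgroup by root subgroup plus the torus part $T^1\subseteq T\cap K_r$ (which holds once $r$ exceeds the level of $T^1$, automatic for $p-1>eh$ since $T^1$ is the pro-$p$ part and $E$ is chosen large).

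For the torus component I would argue that $t(T^1)t^{-1}=T^1$ (conjugation by an element of $T$ is trivial on $T$), so we just need $T^1\subseteq K_r\cap T$; by the description of the congruence filtration of $\mathcal{G}_{x_0}^\circ$ recalled before the lemma, $T^r$ is cut out by a condition on the Neron model, and after base change to a sufficiently ramified $E$ the $p$-valuation $v_E$ of every element of $T^1$ is as large as we like relative to the fixed target $r$ — more precisely $T^1$ is a finitely generated pro-$p$ group, each topological generator has positive valuation, and raising $e(E/L)$ multiplies all these valuations by $e(E/L)$ while $r$ grows only proportionally to $e(E/L)/(p-1)$, so for $e(E/L)$ large enough (using $p-1>eh>e$ somewhere to get the strict inequality we need) we get $T^1\subseteq T^r\subseteq K_r$.

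The main obstacle, as the discussion above flags, is getting the signs and the arithmetic of $r$ versus $e_E$ to cooperate: conjugation by a dominant $\lambda$ helps the positive roots but hurts the negative ones (and vice versa), so a single cocharacter cannot do the job, and one must exploit that $U_{\alpha,1}$ for $\alpha\in\Phi^-$ already sits one level deep together with a careful choice of how ramified $E$ is. Once the right $(E,t)$ is pinned down, the verification is just the displacement formula for $u_\alpha$ applied to each of the $|\Phi|$ root subgroups in (\ref{iwahori}) together with the torus estimate, using the hypothesis $p-1>eh$ to guarantee $r$ is large enough for Lemma \ref{char} to apply to $K_r$. I expect the bound $p-1>eh$ (rather than the weaker $p-1>e$) to enter precisely at the step where the worst root $\theta$ with $\text{ht}(\theta)=h-1$ gets displaced, matching the Coxeter number appearing in the statement.
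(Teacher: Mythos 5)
There is a genuine computational error early in your argument that derails the rest. You write that conjugation by $t\in{\bf{T}}(E)$ sends $U_{\alpha,k}$ into $U_{\alpha,k+v_E(\alpha(t))}$, and then conclude that $U_{\alpha,1}$ for $\alpha\in\Phi^-$ lands in $U_{\alpha,1-m(h-1)}$ in the worst case. But $U_{\alpha,1}=u_\alpha(\m)$, and inside $E$ one has $\m\OO_E=\m_E^{e(E/L)}$, so the correct starting level relative to $\m_E$ is $e(E/L)$, not $1$. After conjugation the level is $e(E/L)+v_E(\alpha(t))$. Since you are free to take $E/L$ highly ramified, the term $e(E/L)$ dominates, and there is no sign problem at all. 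Your conclusion that ``a single cocharacter cannot do the job'' is therefore false, and the subsequent flailing (the Weyl-conjugate reweighting, ``replacing $\omega$-normalization'', etc.) is trying to repair a non-issue; you never actually pin down a concrete $(E,t)$.

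The paper does exactly what you first tried: pick a cocharacter $\mu$ with $\langle\alpha,\mu\rangle$ proportional to $\text{ht}(\alpha)$, and set $t=\mu(c)$ for $c$ of \emph{small positive} valuation. Working with the $\Q_p$-normalized valuation $v$ (so $v(p)=1$, making the choice of $E$ transparent), the inclusion $tIt^{-1}\subset K_r$ for all three factors of the Iwahori decomposition reduces, as you correctly set up, to checking $\alpha(t)\m\subset\m_E^r$ for $\alpha\in\Phi^-$, $\m\subset\m_E^r$ for the torus, and $\alpha(t)\OO\subset\m_E^r$ for $\alpha\in\Phi^+$. Since $r$ is the least integer $>e_E/(p-1)$, one can make $r/e_E$ arbitrarily close to $1/(p-1)$ by enlarging $e_E$, and the three conditions then consolidate to $\frac{1}{p-1}<v(\alpha(t))<\frac{1}{e}-\frac{1}{p-1}$ for every $\alpha\in\Phi^+$. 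Choosing $c$ with $v(c)=\frac{1}{aeh}$ (so $v(\alpha(t))=\text{ht}(\alpha)/(eh)$, ranging from $1/(eh)$ to $(h-1)/(eh)$), both inequalities become equivalent to $p-1>eh$. The critical slack that makes the negative roots harmless, which you lost, is precisely the $\frac{1}{e}$ on the right-hand side coming from $\m=\varpi\OO$. Your torus argument is vague but essentially correct in spirit (it comes down to $\m\subset\m_E^r$, i.e. $p-1>e$, which follows a fortiori); the root-subgroup bookkeeping is where the proposal actually breaks.
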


\begin{proof}
Let $E/L$ be arbitrary for now. We apply \cite[Prop.~7.9]{OS19} to the base change ${\bf{G}}\times_{\text{Spec}(L)} \text{Spec}(E)$ and the facet $\{x_0\}$. 
This tells us the multiplication map gives a homeomorphism
$$
\prod_{\alpha\in \Phi^-}U_{\alpha,r}^E \times T_E^r \times \prod_{\alpha\in \Phi^+}U_{\alpha,r}^E \overset{\sim}{\longrightarrow} K_r.
$$
Here the superscripts/subscript $E$ indicate we are are now working over $E$. For instance $U_{\alpha,r}^E=u_{\alpha}^E(\m_E^r)$ where
$u_{\alpha}^E: E \overset{\sim}{\longrightarrow} {\bf{U}}_{\alpha}(E)$ extends $u_{\alpha}$. By conjugating the analogous factorization (\ref{iwahori}) of $I$ by $t \in {\bf{T}}(E)$ we see that the inclusion 
$tIt^{-1}\subset K_r$ holds if and only if

\begin{itemize}
\item[1.] For $\alpha \in \Phi^-$,
$$
tU_{\alpha,1}t^{-1}\subset U_{\alpha,r}^E \Longleftrightarrow \alpha(t)\m \subset \m_E^r \Longleftrightarrow -v(\alpha(t))<\frac{1}{e}-\frac{1}{p-1}.
$$
\item[2.] For the middle factor,
$$
tT^1t^{-1}=T^1 \subset T_E^r \Longleftrightarrow \m\subset \m_E^r \Longleftrightarrow p-1>e.
$$
\item[3.] For $\alpha \in \Phi^+$,
$$
tU_{\alpha,0}t^{-1}\subset U_{\alpha,r}^E \Longleftrightarrow \alpha(t)\OO \subset \m_E^r \Longleftrightarrow v(\alpha(t))> \frac{1}{p-1}.
$$
\end{itemize}

We remind the reader that $v(p)=1$. These inequalities can be summarized as saying that
$$
\frac{1}{p-1}<v(\alpha(t))<\frac{1}{e}-\frac{1}{p-1}
$$
for all $\alpha \in \Phi^+$. It remains to construct a pair $(E,t)$ satisfying all these inequalities. We will do this by means of an auxiliary cocharacter defined as follows. Start by enlarging $\Delta$ to a $\Q$-basis for $X^*({\bf{T}})_{\Q}$ and choose a linear form $X^*({\bf{T}})_{\Q}\rightarrow \Q$ sending all elements of $\Delta$ to one. This is given by pairing with some element $\mu_0 \in X_*({\bf{T}})_{\Q}$. Thus $\langle \alpha,\mu_0 \rangle=\text{ht}(\alpha)$ for all $\alpha \in \Phi$. Pick an $a \in \N$ for which $a\mu_0 \in X_*({\bf{T}})$ and call this cocharacter $\mu$. Now suppose $E/L$ is large enough to contain an element $c \neq 0$ with $v(c)=\frac{1}{aeh}$. We claim $t=\mu(c)$ works. Indeed 
$$
v(\alpha(t))=v(c^{\langle\alpha,\mu\rangle})=\langle\alpha,\mu\rangle v(c)=a\text{ht}(\alpha)\cdot \frac{1}{aeh}=\frac{\text{ht}(\alpha)}{eh}
$$ 
and $1\leq \text{ht}(\alpha)\leq h-1$ for all $\alpha \in \Phi^+$. It therefore suffices to check that 
$$
\frac{1}{p-1}<\frac{1}{eh}<\frac{1}{h-1}\cdot \bigg(\frac{1}{e}-\frac{1}{p-1}\bigg),
$$
and as one readily verifies this amounts to our assumption that $p-1>eh$. 
\end{proof}

Combining the two preceding lemmas endows $I$ with a $p$-valuation $\omega$ by restricting and conjugating the one on $K_r$. We wish to give an intrinsic description of 
$\omega$ independent of the choice of $(E,t)$. First we introduce the following notion.

\begin{defn}\label{compatible}
Let $H$ be a $p$-valuable group and let $(H_1,\ldots,H_s)$ be a list of subgroups such that the multiplication map 
$$
m: H_1 \times \cdots \times H_s \longrightarrow H
$$
is bijective. We say that a $p$-valuation $\omega$ on $H$ is compatible with the tuple $(H_1,\ldots,H_s)$ if 
$$
\omega(h_1\cdots h_s)=\min\{\omega(h_1),\ldots, \omega(h_s)\}
$$
for all $h_i \in H_i$ with $i=1,\ldots,s$. 
\end{defn}

In this situation one can construct an ordered basis for $(H,\omega)$ by concatenating ordered bases for each factor $(H_i,\omega|_{H_i})$ as $i=1,\ldots,s$. For the definition of an ordered basis of a $p$-valued group we refer the reader to \cite[p.~182]{Sch11}.

Here is our characterization of the $p$-valuation we have defined on $I$.

\begin{prop}\label{char}
Assume $p-1>eh$. Then there exists a unique $p$-valuation $\omega$ on $I$ satisfying the following properties:
\begin{itemize}
\item[(a)] $\omega$ is compatible with the Iwahori factorization (\ref{iwahori}) of $I$, cf. Definition \ref{compatible}.
\item[(b)] $\omega(u_{\alpha}(x))=v(x)+\frac{\text{ht}(\alpha)}{eh}$ where
$\begin{cases}
\text{$x\in \m$ if $\alpha \in \Phi^-$} \\
\text{$x \in \OO$ if $\alpha \in \Phi^+$.} 
\end{cases}$
\item[(c)] $\omega(t)=\frac{1}{e}\cdot \sup\{n \in \N: t \in T^n\}$ for $t \in T^1$.
\end{itemize}
(Recall that $T^n$ denotes the $n^{\text{th}}$ congruence subgroup of $T$; in part (a) the compatibility of $\omega$ holds for any ordering of the roots
$\Phi^{\pm}$.) Moreover $(I,\omega)$ is saturated (i.e., every $g \in I$ with $\omega(g)\geq \frac{p}{p-1}$ lies in $I^p$).  
\end{prop}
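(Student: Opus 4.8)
The plan is to transport everything through the conjugation-plus-restriction recipe established in Lemma \ref{Et} and the preceding lemma, and then read off the three asserted formulas. Fix a pair $(E,t)$ as in Lemma \ref{Et}, so that $tIt^{-1}\subset K_r$ with $r$ the least integer $>e_E/(p-1)$, and define $\omega$ on $I$ by $\omega(g)=\omega_{K_r}(tgt^{-1})$, where $\omega_{K_r}$ is the $p$-valuation on $K_r$ from the first lemma. Since a subgroup of a $p$-valued group inherits a $p$-valuation by restriction and conjugation by a fixed element is an isomorphism of groups, this $\omega$ is a $p$-valuation on $I$. The content of the proposition is that this $\omega$ has an intrinsic description, and in particular does not depend on $(E,t)$.

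First I would verify properties (b) and (c), which pin down $\omega$ on each factor of the Iwahori factorization (\ref{iwahori}). For a root subgroup element $u_\alpha(x)$ we have $t u_\alpha(x) t^{-1}=u_\alpha^E(\alpha(t)x)$, and by the first lemma $\omega_{K_r}$ of an element $u_\alpha^E(y)$ equals $v(y)$ (the normalized valuation with $v(p)=1$, which is what $\tfrac{1}{e_E}\sup\{n:y\in\m_E^n\}$ computes). With $t=\mu(c)$ and $v(c)=\tfrac{1}{aeh}$ as in the proof of Lemma \ref{Et}, one gets $v(\alpha(t))=\tfrac{\mathrm{ht}(\alpha)}{eh}$, hence $\omega(u_\alpha(x))=v(\alpha(t)x)=v(x)+\tfrac{\mathrm{ht}(\alpha)}{eh}$, which is (b); note this is manifestly independent of $(E,t)$. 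For $t'\in T^1$ we have $tt't^{-1}=t'$, so $\omega(t')=\omega_{K_r}(t')=\tfrac{1}{e_E}\sup\{n:t'\in T_E^n\}$, and one checks $\tfrac{1}{e_E}\sup\{n:t'\in T_E^n\}=\tfrac{1}{e}\sup\{n:t'\in T^n\}$ using that $T^n\cdot\OO_E$-points and congruence filtrations on the Neron model of ${\bf T}$ are compatible with unramified-or-not base change in the expected way (the congruence filtration index scales by the ramification, which cancels against the $\tfrac{1}{e_E}$ versus $\tfrac{1}{e}$); this gives (c), again independent of $(E,t)$.

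Next, property (a): I would deduce compatibility of $\omega$ with the factorization (\ref{iwahori}) from compatibility of $\omega_{K_r}$ with the corresponding factorization of $K_r$ furnished by \cite[Prop.~7.9]{OS19}. The point is that $t(\cdot)t^{-1}$ carries the factorization (\ref{iwahori}) of $I$ termwise into the factorization of $K_r$ (possibly after reindexing, but the min is insensitive to order), so it suffices to know $\omega_{K_r}(h_1\cdots h_s)=\min_i\omega_{K_r}(h_i)$ for elements $h_i$ lying in the respective factors $U_{\alpha,r}^E$, $T_E^r$. This in turn is the standard fact that on a congruence subgroup of a smooth $\OO_E$-group scheme coming from a Bruhat-Tits-style valuated root datum, the valuation $\omega_{K_r}(g)=\tfrac{1}{e_E}\sup\{n:g\in K_n\}$ is the min of the valuations of the root-group and torus components — this is built into the way the filtration $K_n$ interacts with the product decomposition, and can be seen by reducing modulo $\m_E^n$ for each $n$. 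Once (a)--(c) hold, uniqueness is immediate: any $p$-valuation satisfying (a) is determined on $I$ by its values on the factors, and (b), (c) prescribe those.

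Finally, saturation. Here I would argue: $(K_r,\omega_{K_r})$ is saturated. One way is to cite that congruence subgroups $K_n$ of such group schemes for $n>e_E/(p-1)$ form a $p$-saturated filtration — i.e. $g\in K_n$ with $\omega_{K_r}(g)\ge\tfrac{p}{p-1}$ means $g\in K_m$ with $m\ge \tfrac{pe_E}{p-1}$, and one shows $K_m\subset K_{m-e_E}^{\,p}$ in this range because the $p$-th power map on the congruence filtration is an isomorphism $K_j/K_{j+e_E}\xrightarrow{\ \sim\ }K_{j+e_E}/K_{j+2e_E}$ once $j>e_E/(p-1)$ (exponential/logarithm or Lazard's analysis of $p$-th powers on uniform-type groups). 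Then saturation transports along the group isomorphism $g\mapsto tgt^{-1}$: if $g\in I$ has $\omega(g)=\omega_{K_r}(tgt^{-1})\ge\tfrac{p}{p-1}$, then $tgt^{-1}=(tht^{-1})^p$ for some $h\in I$ with $tht^{-1}\in K_r$, wait — one needs $h\in I$, which follows since $tgt^{-1}\in tIt^{-1}$ and taking $p$-th roots inside the torsion-free $p$-valued group $K_r$ lands the root back in $tIt^{-1}$ (the $p$-th root is unique in a $p$-saturated $p$-valued group, and $(th't^{-1})^p=tg t^{-1}$ with $h'\in I$ exhibits such a root). Hence $g=h'^p\in I^p$. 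The main obstacle I anticipate is making the base-change bookkeeping in (c) and the $p$-saturation of the congruence filtration of $\mathcal G(\OO_E)$ fully rigorous; both are "well-known" but genuinely require care with the Neron model filtration and with Lazard's $p$-th power estimates, so I would isolate each as a lemma and give a careful reduction-mod-$\m_E^n$ argument rather than wave at them.
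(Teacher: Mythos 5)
Your handling of (a)--(c) and uniqueness tracks the paper's argument closely: the paper likewise conjugates by the fixed $t=\mu(c)$, reads off (b) from $v(\alpha(t))=\mathrm{ht}(\alpha)/(eh)$, reduces (c) to the $\GL_1$ case via $\omega(t_1)=\tfrac{1}{e}v_L(t_1-1)$, and obtains (a) by observing that $tgt^{-1}\in K_n$ exactly when each conjugated Iwahori factor lies in its own filtration step (which is what \cite[Prop.~7.9]{OS19} delivers). So those parts are fine, modulo the bookkeeping you already flagged.

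The saturation argument has a genuine gap, and it is not one of bookkeeping. You propose to show $K_r$ is saturated and then ``transport'' saturation to $I$ along $g\mapsto tgt^{-1}$. But $tIt^{-1}$ is a proper subgroup of $K_r$, and saturation is not inherited by subgroups: if $\omega(g)\ge\tfrac{p}{p-1}$ for $g\in tIt^{-1}$, saturation of $K_r$ produces a $p$-th root $k\in K_r$, and nothing yet forces $k\in tIt^{-1}$. Your parenthetical tries to repair this by asserting that ``$(th't^{-1})^p=tgt^{-1}$ with $h'\in I$ exhibits such a root,'' but the existence of such an $h'\in I$ is precisely what saturation of $I$ would give you --- the step is circular. (For a toy illustration of the phenomenon, $p\Z_p\oplus p^2\Z_p$ is a non-saturated $p$-valued subgroup of the saturated $\Z_p\oplus\Z_p$ with $\omega=\min(v_p,v_p)$: the element $(p^2,p^2)$ has valuation $2\ge\tfrac{p}{p-1}$ yet its unique $p$-th root $(p,p)$ falls outside the subgroup.) The paper sidesteps this entirely by exhibiting an explicit ordered basis of $(I,\omega)$ --- namely $u_\alpha(\varpi b_i)$ for $\alpha\in\Phi^-$, the torus elements $\mu(e^{\varpi b_i})$, and $u_\alpha(b_i)$ for $\alpha\in\Phi^+$ --- checking directly that every basis element has $p$-valuation at most $\tfrac{p}{p-1}$, and invoking \cite[Prop.~26.11]{Sch11}, which says a $p$-valued group admitting such a basis is saturated. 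If you want to keep a transport-style argument you would need to show that $tIt^{-1}$ is itself saturated as a sub-$p$-valued group of $K_r$, which in practice again comes down to producing the ordered basis, so the paper's route is both shorter and actually closes the gap.
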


\begin{proof}
Uniqueness is clear (as $\omega$ is determined by its restriction to each factor of the Iwahori decomposition). We need to show the $\omega$ we defined satisfies (a) through (c). 
Fix $(E,t)$ as in the proof of Lemma \ref{Et}. Then $\omega(g)=\frac{1}{e_E}\cdot \sup\{n \in \N: tgt^{-1}\in K_n\}$ for $g \in I$. Consider the Iwahori factorization
$$
g=\bigg(\prod_{\alpha \in \Phi^-}u_{\alpha}\bigg) \cdot t_1\cdot \bigg(\prod_{\alpha \in \Phi^+}u_{\alpha}\bigg)
$$
according to (\ref{iwahori}). Conjugating by $t$ we find that $tgt^{-1}\in K_n$ if and only if all $tu_{\alpha}t^{-1}\in U_{\alpha,n}^E$ and $t_1 \in T_E^n$. The largest $n$ with this property is the minimum of $\min_{\alpha \in \Phi}\omega(u_{\alpha})$ and $\omega(t_1)$. This shows (a). The formula in (b) follows almost immediately from the relation $v(\alpha(t))=\frac{\text{ht}(\alpha)}{eh}$ as noted in the proof of Lemma \ref{Et}. Since ${\bf{T}}$ is split the proof of (c) easily reduces to the $\GL_1$-case. When $t_1 \in 1+\m$ just note that the largest $n$ for which $t_1 \in 1+\m_E^n$ is $n=e(E/L)v_L(t_1-1)$. In other words $\omega(t_1)=\frac{1}{e}v_L(t_1-1)$ as claimed. (Here $v_L$ denotes the additive valuation on $L$ with $v_L(\varpi)=1$.)

To show saturation it suffices to find an ordered basis consisting of elements with $p$-valuation $\leq \frac{p}{p-1}$, see
\cite[Prop.~26.11]{Sch11}. We start by writing down a basis for $U_{\alpha,0}$ for all $\alpha \in \Phi^+$. Pick a $\Z_p$-basis 
$(b_1,\ldots,b_{\ell})$ for $\OO$. Then $(u_{\alpha}(b_1),\ldots, u_{\alpha}(b_{\ell}))$ is a basis for $U_{\alpha,0}$. Indeed any element $u \in U_{\alpha,0}$ can be factored uniquely as
$$
u=u_{\alpha}(x)=\prod_{i=1}^{\ell} u_{\alpha}(b_i)^{x_i}
$$
where $x=\sum_{i=1}^{\ell} x_ib_i$ with coefficients $x_i \in \Z_p$. To verify that $\omega(u)=\min_{i=1,\ldots,\ell}\{v(x_i)+\omega(u_{\alpha}(b_i))\}$ it suffices to note the relations
$v(x)=\min_{i=1,\ldots,\ell}\{v(x_i)\}$ and $v(b_i)=0$. Minor modifications of the argument when $\alpha \in \Phi^-$ show that $U_{\alpha,1}$ has basis 
$(u_{\alpha}(\varpi b_1),\ldots, u_{\alpha}(\varpi b_{\ell}))$. Finally, to find a basis for $T^1$ we first consider the $\GL_1$-case $1+\m$. Since $p-1>e$ the exp and log give
isomorphisms $\m^n \overset{\sim}{\longrightarrow} 1+\m^n$ for all $n \geq 1$. It follows easily that $(e^{\varpi b_1},\ldots,e^{\varpi b_{\ell}})$ is a basis for $1+\m$. (Here $e^{\varpi b_i}=\exp(\varpi b_i)$ and $e$ should not be confused with the ramification index.)
In general pick a basis $\mathcal{B}$ for $X_*({\bf{T}})$ and concatenate all tuples $(\mu(e^{\varpi b_1}),\ldots,\mu(e^{\varpi b_{\ell}}))$ as $\mu \in \mathcal{B}$ varies. On these basis elements the $p$-valuation takes the values
\begin{itemize}
\item $\omega(u_{\alpha}(\varpi b_i))=\frac{1}{e}+\frac{\text{ht}(\alpha)}{eh}$ for $\alpha \in \Phi^-$.
\item $\omega(\mu(e^{\varpi b_i}))=\frac{1}{e}$ for  $\mu \in \mathcal{B}$.
\item $\omega(u_{\alpha}(b_i))=\frac{\text{ht}(\alpha)}{eh}$ for $\alpha \in \Phi^+$.
\end{itemize}
We just need to check all these numbers are at most $\frac{p}{p-1}$. For example $\frac{\text{ht}(\alpha)}{eh}\leq \frac{h-1}{eh}<\frac{1}{e}\leq 1<\frac{p}{p-1}$ for all 
positive roots $\alpha$, and $\frac{1}{e}+\frac{\text{ht}(\alpha)}{eh}\leq \frac{1}{e}-\frac{1}{eh}=\frac{h-1}{eh}<\frac{p}{p-1}$ also for negative roots $\alpha$.
\end{proof}

The observation that $(I,\omega)$ is saturated will be crucial in the next section when we attach a rigid analytic group to this pair. 

\begin{rem}
For $G=\GL_n(\Q_p)$ the condition $p-1>eh$ is also necessary for the existence of a $p$-valuation. In this example $e=1$ and $h=n$, and when $p\leq n+1$ there is $p$-torsion in $I$. 
To see this, for $p \leq n$ just consider a permutation matrix given by a $p$-cycle. A conjugate thereof lies in $I$. For $p=n+1$ consider multiplication by 
$\zeta_p$ on the free rank $n$ module $\Z_p[\zeta_p]$ over $\Z_p$. Again this gives an element of order $p$ in $\GL_n(\Z_p)$ which can be conjugated into $I$ because the latter is a Sylow pro-$p$ subgroup. (Since a $p$-valuation satisfies $\omega(g^p)=\omega(g)+1$ a $p$-valuable group is torsion-free.)
\end{rem}

For future reference we want to record the following consequence of the proof of Proposition \ref{char}.

\begin{cor}
Assume $p-1>eh$ and pick a $\Z_p$-basis $(b_1,\ldots,b_{\ell})$ for $\OO$. If ${\bf{G}}$ is semisimple and simply connected
the following is an ordered basis for $(I,\omega)$.
\begin{itemize}
\item $(u_{\alpha}(\varpi b_1),\ldots, u_{\alpha}(\varpi b_{\ell}))_{\alpha \in \Phi^-}$,
\item $(\alpha^{\vee}(e^{\varpi b_1}),\ldots,\alpha^{\vee}(e^{\varpi b_{\ell}}))_{\alpha \in \Delta}$,
\item $(u_{\alpha}(b_1),\ldots, u_{\alpha}(b_{\ell}))_{\alpha \in \Phi^+}$.
\end{itemize}
All basis elements have $p$-valuation at most $\frac{1}{e}$.
\end{cor}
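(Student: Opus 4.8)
The plan is to assemble the ordered basis from the three pieces already built in the proof of Proposition \ref{char}, and then to invoke the compatibility statement (a) to conclude that the concatenation is an ordered basis for $(I,\omega)$. Concretely, the proof of Proposition \ref{char} exhibits for each $\alpha \in \Phi^+$ the basis $(u_{\alpha}(b_1),\ldots,u_{\alpha}(b_{\ell}))$ of $U_{\alpha,0}$, for each $\alpha \in \Phi^-$ the basis $(u_{\alpha}(\varpi b_1),\ldots,u_{\alpha}(\varpi b_{\ell}))$ of $U_{\alpha,1}$, and a basis of $T^1$ obtained by concatenating the tuples $(\mu(e^{\varpi b_1}),\ldots,\mu(e^{\varpi b_{\ell}}))$ over a $\Z_p$-basis $\mathcal{B}$ of $X_*({\bf{T}})$. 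Since $\omega$ is compatible with the factorization (\ref{iwahori}) by part (a), the remark following Definition \ref{compatible} says exactly that concatenating these ordered bases (in the order: $\Phi^-$ factors, then $T^1$, then $\Phi^+$ factors) yields an ordered basis for $(I,\omega)$. The $p$-valuations of all these elements were computed in the same proof, giving the three displayed values $\frac{1}{e}+\frac{\text{ht}(\alpha)}{eh}$, $\frac{1}{e}$, and $\frac{\text{ht}(\alpha)}{eh}$ respectively; each of these is $\le \frac{1}{e}$ because $\text{ht}(\alpha)\le h-1$ forces $\frac{\text{ht}(\alpha)}{eh}<\frac{1}{e}$, and $\frac{1}{e}+\frac{\text{ht}(\alpha)}{eh}\le \frac{1}{e}+\frac{h-1}{eh}$, which however is \emph{not} $\le \frac1e$ — so I will need to recheck this bound; most likely the negative-root generators should instead be taken as $u_{\alpha}(\varpi b_i)$ with the correct height normalization making $\omega = \frac1e - \frac{h-\text{ht}(\alpha^\vee)}{eh}$ or the statement intends $\le \frac{p}{p-1}$, and I would reconcile the constant before finalizing.

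The only genuinely new input over Proposition \ref{char} is the specialization to ${\bf{G}}$ semisimple and simply connected, where the list in the corollary writes the torus generators as $\alpha^{\vee}(e^{\varpi b_i})$ for $\alpha$ running over the simple roots $\Delta$, rather than $\mu(e^{\varpi b_i})$ for $\mu$ in an abstract basis $\mathcal{B}$ of $X_*({\bf{T}})$. So the key step is the observation that when ${\bf{G}}^{\text{der}}={\bf{G}}$ is simply connected the coroots $\{\alpha^{\vee}: \alpha\in\Delta\}$ form a $\Z$-basis of the cocharacter lattice $X_*({\bf{T}})$. This is standard: simply connectedness of ${\bf{G}}$ means the coroot lattice $\Z\Phi^{\vee}$ equals $X_*({\bf{T}})$, and $\{\alpha^{\vee}:\alpha\in\Delta\}$ is a basis of $\Z\Phi^{\vee}$ since $\Delta$ is a base of the root system. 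Hence one may take $\mathcal{B}=\{\alpha^{\vee}:\alpha\in\Delta\}$ in the construction from Proposition \ref{char}, and the middle block of the asserted list is literally the torus part of that ordered basis, with $\omega(\alpha^{\vee}(e^{\varpi b_i}))=\frac{1}{e}$ by part (c) (equivalently, by the $\GL_1$-computation $\omega(e^{\varpi b_i})=\frac1e v_L(\varpi b_i) = \frac1e$ transported along the cocharacter $\alpha^\vee$, using that $\alpha^\vee$ is a split cocharacter so congruence levels are preserved).

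I expect the main obstacle — really the only point requiring care — to be the arithmetic of the valuation bound, i.e. pinning down whether the intended upper bound is $\frac1e$ or $\frac{p}{p-1}$, and correspondingly whether the negative root generators are $u_{\alpha}(\varpi b_i)$ or something with a shifted $\varpi$-power; everything else is bookkeeping on top of Proposition \ref{char} and Definition \ref{compatible}. Assuming the bound is as I suspect (all three families have $\omega$-values in $\{\frac{\text{ht}(\alpha)}{eh}, \frac1e, \frac1e+\frac{\text{ht}(\alpha)}{eh}\}$, all $<\frac{p}{p-1}$, and the sharper claim $\le\frac1e$ holds for the positive-root and torus blocks with the negative-root block needing $\text{ht}$ replaced by $\text{ht}-1$ or the inequality relaxed), the write-up is short: state the simply-connected $\Rightarrow$ coroots-are-a-basis fact, plug $\mathcal{B}=\Delta^\vee$ into Proposition \ref{char}, quote compatibility (a) to concatenate, and recopy the three valuation values from that proof.
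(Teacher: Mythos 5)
Your approach matches the paper's one-line proof exactly: cite the work already done in the proof of Proposition~\ref{char} and note that for ${\bf G}$ semisimple and simply connected one may take $\mathcal{B}=\Delta^{\vee}$ (since in that case the simple coroots form a $\Z$-basis of $X_*({\bf T})=\Z\Phi^{\vee}$). Your observation that the concatenation principle after Definition~\ref{compatible} is what assembles the three blocks into an ordered basis is also the right (implicit) step.

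However, the hedging at the end is a genuine gap: you do not actually establish the claimed bound $\omega \le \frac{1}{e}$. The confusion comes from misreading the formula $\omega(u_{\alpha}(\varpi b_i))=\frac{1}{e}+\frac{\text{ht}(\alpha)}{eh}$ for $\alpha\in\Phi^-$. Here $\text{ht}$ is the height function on \emph{all} of $\Phi$, so for a negative root $\alpha$ one has $\text{ht}(\alpha)\le -1$ (indeed $\text{ht}(\alpha)\in\{-(h-1),\ldots,-1\}$). Thus $\frac{1}{e}+\frac{\text{ht}(\alpha)}{eh}\le \frac{1}{e}-\frac{1}{eh}=\frac{h-1}{eh}<\frac{1}{e}$, and the bound holds with room to spare; this is the exact inequality the paper records in the proof of Proposition~\ref{char}. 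Your alternative guesses (a shifted $\varpi$-power in the negative-root generators, a relaxed bound $\frac{p}{p-1}$, or a rewritten formula involving $h-\text{ht}(\alpha^{\vee})$) are all unnecessary and incorrect; the statement is true as written once the sign of $\text{ht}(\alpha)$ on $\Phi^-$ is used.
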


\begin{proof}
This was shown above. When ${\bf{G}}$ is semisimple and simply connected one can take $\mathcal{B}=\Delta^{\vee}$.
\end{proof}

\section{Rigid analytic groups}\label{analytic}

In this section $(H,\omega)$ denotes an arbitrary $p$-valued group equipped with an ordered basis $(h_1,\ldots,h_d)$.
Thus $(x_1,\ldots,x_d)\mapsto h_1^{x_1}\cdots h_d^{x_d}$ gives a homeomorphism $\Z_p^d \rightarrow H$ and $\omega(h)=\min_{i=1,\ldots,d}\{v(x_i)+\omega(h_i)\}$
where $h \in H$ is factored as $h_1^{x_1}\cdots h_d^{x_d}$. 

We will review how to associate an affinoid rigid analytic group over $\Q_p$ to a saturated pair $(H,\omega)$. The underlying rigid analytic space is nothing but the 
$d$-dimensional closed ball $\text{Sp} (\Q_p\langle Z_1,\ldots Z_d \rangle)$. Here $\Q_p\langle Z_1,\ldots Z_d \rangle$ denotes the Tate algebra in $d$ variables over $\Q_p$. 

\begin{prop}\label{riggrp}
Let $(H,\omega)$ be a saturated $p$-valued group with basis $(h_1,\ldots,h_d)$. Then there exists a unique rigid analytic group
$\Bbb{H}$ over $\Q_p$ such that
\begin{itemize}
\item[(a)] $\Bbb{H}=\text{Sp} (\Q_p\langle Z_1,\ldots Z_d \rangle)$ as a rigid analytic space;
\item[(b)] the map $\varphi \mapsto h_1^{\varphi(Z_1)}\cdots h_d^{\varphi(Z_d)}$ gives an isomorphism of abstract groups
$$
\Bbb{H}(\Q_p)=\Hom_{\Q_p-\text{alg}}(\Q_p\langle Z_1,\ldots Z_d \rangle,\Q_p) \overset{\sim}{\longrightarrow} H.
$$
\end{itemize}
\end{prop}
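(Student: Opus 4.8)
The plan is to construct $\Bbb{H}$ by transporting the group structure of $H$ across the coordinate homeomorphism and then checking that the resulting multiplication and inversion maps are given by rigid analytic functions, i.e.\ by power series converging on the unit polydisc. Concretely, write $c\colon \Z_p^d \overset{\sim}{\longrightarrow} H$, $c(x_1,\dots,x_d)=h_1^{x_1}\cdots h_d^{x_d}$. Via $c$, group multiplication becomes a map $\Z_p^d\times\Z_p^d\to\Z_p^d$, which by \cite[\S III.3]{Laz65} (or \cite[Ch.~V]{Sch11}) is given in each coordinate by a convergent power series $F_k\in\Q_p[[Y_1,\dots,Y_d,Y_1',\dots,Y_d']]$; similarly inversion is given by power series $G_k$. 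The key input from the hypotheses is that $(H,\omega)$ is \emph{saturated}: this is precisely what forces these Lazard power series to have coefficients that are bounded on the full closed polydisc rather than merely on a small one, so that they define morphisms of the affinoid $\text{Sp}(\Q_p\langle Z_1,\dots,Z_d\rangle)$. I would then \emph{define} $\Bbb{H}$ to be $\text{Sp}(\Q_p\langle Z_1,\dots,Z_d\rangle)$ with comultiplication $Z_k\mapsto F_k(Z_1\otimes 1,\dots,1\otimes Z_d)$ and coinverse $Z_k\mapsto G_k(Z)$; the group axioms for $\Bbb{H}$ follow formally from those for $H$ since $c$ is a bijection intertwining everything. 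Property (a) holds by construction, and (b) holds because a $\Q_p$-algebra homomorphism $\Q_p\langle Z\rangle\to\Q_p$ is exactly a $d$-tuple in $\Z_p^d$ (the functions $Z_k$ take values in $\Z_p$ on points), and the bijection $\varphi\mapsto c(\varphi(Z_1),\dots,\varphi(Z_d))$ then intertwines the group laws by the very definition of the comultiplication.

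For uniqueness: suppose $\Bbb{H}'$ is another rigid group satisfying (a) and (b). Then the identity on the underlying space $\text{Sp}(\Q_p\langle Z\rangle)$ induces a bijection $\Bbb{H}'(\Q_p)\to\Bbb{H}(\Q_p)$ which, by (b) for both, is the identity on $H$. A morphism of affinoid rigid spaces over $\Q_p$ whose underlying space is a closed polydisc is determined by its effect on $\Q_p$-points (the $\Q_p$-points are Zariski dense in $\text{Sp}(\Q_p\langle Z\rangle)$, and two elements of $\Q_p\langle Z\rangle$ agreeing on all of $\Z_p^d$ are equal). Hence the comultiplications of $\Bbb{H}$ and $\Bbb{H}'$ agree on $\Q_p$-points and therefore coincide, and likewise for the coinverses; so $\Bbb{H}'=\Bbb{H}$.

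I would structure the write-up as: (i) recall Lazard's description of the group operations of a $p$-valued group of finite rank in the coordinates of an ordered basis as power series, and quote the integrality/convergence statement; (ii) invoke saturation to upgrade convergence from a small polydisc to the closed unit polydisc — this is the one genuinely load-bearing step and the place where one must be careful about normalizations of $\omega$ (the condition $\omega(g)\ge\frac{p}{p-1}\Rightarrow g\in H^p$ is what makes the $p$-power map, hence the relevant series, behave well on the whole ball); (iii) define the Hopf-algebra structure on $\Q_p\langle Z\rangle$ and verify it is a rigid group with the stated $\Q_p$-points; (iv) prove uniqueness via density of $\Q_p$-points. The main obstacle I anticipate is step (ii): making precise why saturation, and not merely $p$-valuability, is exactly the condition that lands the Lazard series in $\Q_p\langle Z_1,\dots,Z_d\rangle$; I would handle this by citing Lazard's theorem on saturated groups (equivalently, that a saturated $p$-valued group of rank $d$ is, via an ordered basis, $\Z_p^d$ with a $\Q_p$-analytic group law defined on the closed polydisc), e.g.\ \cite[Ch.~IV]{Sch11} together with \cite[\S III.3.2]{Laz65}, rather than reproving it.
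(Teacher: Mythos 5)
Your proposal is correct and follows essentially the same route as the paper: transport the group law to $\Z_p^d$ via the ordered basis, invoke Lazard/Schneider to get the multiplication as power series (cited rather than reproved), use saturation to place these series in the Tate algebra and hence define the comultiplication on $\Q_p\langle Z_1,\ldots,Z_d\rangle$, and conclude uniqueness from the fact that two power series agreeing on all of $\Z_p^d$ coincide (the paper's reference is \cite[Cor.~5.8]{Sch11} with $\varepsilon=1$). The only minor imprecision is calling the relevant integrality condition ``coefficients bounded''; what is needed, and what saturation via \cite[Prop.~29.2, Rem.~29.3]{Sch11} actually delivers, is that the $F_i$ lie in $\Z_p\langle X,Y\rangle$, i.e.\ have $\Z_p$-coefficients forming a null sequence, so they are honest elements of the Tate algebra.
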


\begin{proof}
We construct $\Bbb{H}$. The multiplication map $\Bbb{H}\times_{\text{Sp}(\Q_p)} \Bbb{H} \longrightarrow \Bbb{H}$ arises from comultiplication on the Tate algebra 
$\Q_p\langle Z_1,\ldots Z_d \rangle$ defined as follows. By \cite[Prop.~29.2]{Sch11} and \cite[Rem.~29.3]{Sch11} there exists power series 
$F_i \in \Z_p\langle X_1,\ldots X_d, Y_1,\ldots Y_d  \rangle$ for $i=1,\ldots,d$ such that
$$
(h_1^{x_1}\cdots h_d^{x_d})\cdot (h_1^{y_1}\cdots h_d^{y_d})=h_1^{F_1(x,y)}\cdots h_d^{F_d(x,y)}
$$
for all tuples $x=(x_1,\ldots,x_d)$ and $y=(y_1,\ldots,y_d)$ in $\Z_p^d$. The fact that $F_i$ has coefficients in $\Z_p$ (not just $\Q_p$) is due to the assumption that $(H,\omega)$ is saturated, and this allows us to define the map
$$
\Q_p\langle Z_1,\ldots Z_d \rangle \longrightarrow  \Q_p\langle X_1,\ldots X_d\rangle \widehat{\otimes} \Q_p\langle Y_1,\ldots Y_d  \rangle=\Q_p\langle X_1,\ldots X_d, Y_1,\ldots Y_d  \rangle
$$
by sending $Z_i \mapsto F_i(X,Y)$ where $X=(X_1,\ldots,X_d)$ and similarly for $Y$. Since $(F_1,\ldots,F_d)$ is a formal group law one quickly verifies this gives a rigid analytic group structure on $\Bbb{H}=\text{Sp} (\Q_p\langle Z_1,\ldots Z_d \rangle)$. (The identity $\Sp(\Q_p)\rightarrow \Bbb{H}$ comes from the co-unit on the Tate algebra and we leave it to the reader to construct the inversion map $\Bbb{H} \rightarrow \Bbb{H}$ along the same lines.)

To show this $\Bbb{H}$ satisfies (b) let $\varphi_1,\varphi_2 \in \Bbb{H}(\Q_p)$. Unwinding definitions it suffices to note that
\begin{equation}\label{point}
(\varphi_1 \cdot \varphi_2)(Z_i)=F_i\big(\varphi_1(Z_1),\ldots,\varphi_1(Z_d), \varphi_2(Z_1),\ldots,\varphi_2(Z_d)\big),
\end{equation}
but this is immediate in light of how multiplication on $\Bbb{H}(\Q_p)$ relates to comultiplication.

Finally, suppose $\text{Sp} (\Q_p\langle Z_1,\ldots Z_d \rangle)$ carries some rigid analytic group structure for which (b) holds. This structure is given by power series
$G_i \in \Z_p\langle X_1,\ldots X_d, Y_1,\ldots Y_d  \rangle$ for $i=1,\ldots,d$ satisfying the analogue of (\ref{point}) -- still with $F_i$ on the right-hand side, but with 
$\varphi_1 \cdot \varphi_2$ defined relative to the $G_i$. This leads to the equality
$$
G_i\big(\varphi_1(Z_1),\ldots,\varphi_1(Z_d), \varphi_2(Z_1),\ldots,\varphi_2(Z_d)\big)=F_i\big(\varphi_1(Z_1),\ldots,\varphi_1(Z_d), \varphi_2(Z_1),\ldots,\varphi_2(Z_d)\big)
$$
Since $\varphi_1$ and $\varphi_2$ are arbitrary we infer that
$F_i$ and $G_i$ define the same function $\Z_p^d \times \Z_p^d\rightarrow \Z_p$, and therefore $F_i=G_i$ for all $i$, cf. \cite[Cor.~5.8]{Sch11} with $\varepsilon=1$ and $V=\Q_p$.\end{proof}

The next result shows how this construction behaves under a change of basis.

\begin{prop}\label{unique}
Let $(H,\omega)$ be a saturated group with bases $(h_1,\ldots,h_d)$ and $(h_1',\ldots,h_d')$.
Let $\Bbb{H}$ and $\Bbb{H}'$ denote the rigid analytic groups associated with these bases as in Proposition \ref{riggrp}. There is a unique isomorphism of rigid analytic groups 
$\Bbb{H} \overset{\sim}{\longrightarrow} \Bbb{H}'$ such that the induced map on $\Q_p$-points makes the following diagram commute:
\[
\begin{tikzcd}
    \Bbb{H}(\Q_p) \arrow{rr}{\sim} \arrow[swap]{dr}{} & &  \Bbb{H}'(\Q_p)  \arrow{dl}{} \\[10pt]
    & H
\end{tikzcd}
\]
\end{prop}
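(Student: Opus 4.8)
The plan is to construct the isomorphism $\Bbb{H} \to \Bbb{H}'$ directly on the level of Tate algebras and then check that it is unique. Since both $\Bbb{H}$ and $\Bbb{H}'$ have underlying rigid space the $d$-dimensional closed ball, with coordinates $(Z_1,\ldots,Z_d)$ and $(Z_1',\ldots,Z_d')$ respectively, an isomorphism of rigid spaces $\Bbb{H}'\to\Bbb{H}$ corresponds to a $\Q_p$-algebra homomorphism $\Q_p\langle Z_1,\ldots,Z_d\rangle \to \Q_p\langle Z_1',\ldots,Z_d'\rangle$, i.e. a choice of power series $\psi_i \in \Q_p\langle Z_1',\ldots,Z_d'\rangle$ to which $Z_i$ is sent (with $\psi=(\psi_1,\ldots,\psi_d)$ carrying the ball into itself, which will be automatic since the $\psi_i$ we produce have coefficients in $\Z_p$). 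The natural candidate: the two bases are related by a ``change of coordinates'' map. Concretely, for $x'=(x_1',\ldots,x_d')\in\Z_p^d$ one has $h_1'^{x_1'}\cdots h_d'^{x_d'}\in H$, and since $(h_1,\ldots,h_d)$ is also an ordered basis this element can be rewritten uniquely as $h_1^{x_1}\cdots h_d^{x_d}$ with $x_i\in\Z_p$; the dependence $x_i=\psi_i(x')$ is given (again by \cite[Prop.~29.2]{Sch11}, \cite[Rem.~29.3]{Sch11}, using saturation) by restricted power series $\psi_i\in\Z_p\langle Z_1',\ldots,Z_d'\rangle$. I would take these $\psi_i$ to define the comorphism, so that on $\Q_p$-points the induced map $\Bbb{H}'(\Q_p)\to\Bbb{H}(\Q_p)$ sends $\varphi'$ to the point $\varphi$ with $\varphi(Z_i)=\psi_i(\varphi'(Z_1'),\ldots,\varphi'(Z_d'))$ — which is exactly the map making the triangle commute, by Proposition~\ref{riggrp}(b).

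The key steps, in order: (1) invoke \cite[Prop.~29.2]{Sch11} / \cite[Rem.~29.3]{Sch11} to get the $\psi_i\in\Z_p\langle Z_1',\ldots,Z_d'\rangle$ expressing the old coordinates in terms of the new (this is exactly the same input used in the proof of Proposition~\ref{riggrp}, applied to the identity map of $H$ viewed through the two bases); (2) use the $\psi_i$ to define a $\Q_p$-algebra map $\Q_p\langle Z_i\rangle\to\Q_p\langle Z_j'\rangle$, hence a morphism of rigid spaces $\alpha\colon\Bbb{H}'\to\Bbb{H}$, and check it is an isomorphism by producing the inverse the same way from the reverse change of basis (the composites are the identity because the corresponding maps on $\Z_p^d$ compose to the identity, and a restricted power series on the ball is determined by its values on $\Z_p^d$, cf. \cite[Cor.~5.8]{Sch11}); (3) verify $\alpha$ is a homomorphism of rigid groups: unwinding the comultiplications as in Proposition~\ref{riggrp}, this reduces to the identity $\psi_i(F'(x',y'))=F_i(\psi(x'),\psi(y'))$ of power series in $(X',Y')$, which holds because both sides compute, on $\Z_p$-points, the $i$-th old coordinate of the product $(h'^{x'})(h'^{y'})$ rewritten in the old basis — and again agreement on $\Z_p^d\times\Z_p^d$ forces equality of the power series; (4) identify the map on $\Q_p$-points with the claimed one using Proposition~\ref{riggrp}(b), which gives commutativity of the diagram; (5) prove uniqueness exactly as in the last paragraph of the proof of Proposition~\ref{riggrp}: another isomorphism making the triangle commute would be given by power series $\tilde\psi_i$ inducing the same map on $\Q_p$-points, hence defining the same function $\Z_p^d\to\Z_p$ as $\psi_i$, hence $\tilde\psi_i=\psi_i$ by \cite[Cor.~5.8]{Sch11}.

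The only step with any real content is (3), the multiplicativity of $\alpha$, but even there the work has essentially been done in Proposition~\ref{riggrp}: the ``values on $\Z_p^d$ determine the power series'' principle reduces every power-series identity to a statement about the single underlying abstract group $H$ and its two coordinatizations, where everything is tautological. So I expect the main (minor) obstacle to be purely bookkeeping — keeping straight which change of basis goes in which direction and making sure the composites in (2) land on the nose — rather than anything substantive; indeed the whole proof is a variation on the uniqueness argument already given for Proposition~\ref{riggrp}.
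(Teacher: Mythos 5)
Your proposal is correct and follows essentially the same route as the paper: it builds the isomorphism from the change-of-basis power series supplied by \cite[Prop.~29.2, Rem.~29.3]{Sch11}, checks that it intertwines the comultiplications by the ``determined by values on $\Z_p^d$'' principle, and derives commutativity of the triangle from Proposition~\ref{riggrp}(b). The only cosmetic differences are the direction in which you set up the comorphism and that the paper phrases uniqueness via Zariski density of $\Bbb{H}(\Q_p)$ while you invoke \cite[Cor.~5.8]{Sch11} directly — these are the same fact.
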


\begin{proof}
Uniqueness is a straightforward consequence of the fact that $\Bbb{H}(\Q_p)$ is Zariski dense in $\Bbb{H}$. (An automorphism of 
$\Bbb{H}$ inducing the identity on $\Bbb{H}(\Q_p)$ must be the identity.) 

To define the desired isomorphism we compare the formal group law $\underline{F}=(F_1,\ldots,F_d)$ from the proof of Proposition \ref{riggrp} to the formal group law
$\underline{F}'=(F_1',\ldots,F_d')$ arising from $(h_1',\ldots,h_d')$. Again by \cite[Prop.~29.2]{Sch11} and \cite[Rem.~29.3]{Sch11} there are $d$ power series 
$\Phi_i \in \Z_p \langle X_1,\ldots X_d \rangle$ such that
$$
h_1^{x_1}\cdots h_d^{x_d}=(h_1')^{\Phi_1(x)}\cdots (h_d')^{\Phi_d(x)}
$$
for all $x=(x_1,\ldots,x_d)\in \Z_p^d$. Similarly with the bases interchanged. This gives an isomorphism of formal group laws $\underline{F} \overset{\sim}{\longrightarrow} \underline{F}'$,
see \cite[p.~143]{Sch11}. Mapping $Z_i'\mapsto \Phi_i(Z_1,\ldots,Z_d)$ then yields an isomorphism 
$$
\Q_p\langle Z_1',\ldots Z_d' \rangle\overset{\sim}{\longrightarrow}  \Q_p\langle Z_1,\ldots Z_d \rangle
$$
which intertwines the two comultiplications. Passing to spectra we finally get an isomorphism of rigid analytic groups $\Bbb{H} \overset{\sim}{\longrightarrow} \Bbb{H}'$. The commutativity of the diagram essentially just boils down to the definition of  the $\Phi_i$.
\end{proof}

Let us revisit the situation in Definition \ref{compatible}.

\begin{rem}\label{promote}
Suppose every $(H_i,\omega|_{H_i})$ is saturated, where $i=1,\ldots,s$. Then so is $(H,\omega)$ and one can obtain a basis for the latter by stringing together bases
$$
(h_{11},\ldots,h_{1d_1}) \y \ldots \y (h_{s1},\ldots,h_{sd_s})
$$
for each subgroup. As in Proposition \ref{riggrp} this gives rigid analytic subgroups $\Bbb{H}_1,\ldots,\Bbb{H}_s$ of $\Bbb{H}$ over $\Q_p$, and multiplication in $\Bbb{H}$ gives an isomorphism of rigid analytic spaces 
$$
\Bbb{H}_1\times_{\Sp(\Q_p)} \cdots \times_{\Sp(\Q_p)} \Bbb{H}_s \overset{\sim}{\longrightarrow} \Bbb{H}
$$ 
which on $\Q_p$-points corresponds to the bijection $H_1 \times \cdots \times H_s \longrightarrow H$. In this sense the factorization of $H$ as a $p$-adic manifold can be promoted to an isomorphism of rigid analytic spaces. 
\end{rem}

We end this section by introducing the space of rigid functions on a saturated $p$-valued group $H$. These functions take values in a fixed finite extension $K/\Q_p$. Choosing a basis 
$(h_1,\ldots,h_d)$ gives a rigid analytic group $\Bbb{H}$, and a $K$-valued rigid analytic function on $\Bbb{H}$ naturally defines a continuous function $f:H \rightarrow K$. In more detail, an element $\sum_I c_I Z_1^{i_1}\cdots Z_d^{i_d}$ of the global sections $\Gamma(\Bbb{H},\OO_{\Bbb{H}})\otimes_{\Q_p}K=K\langle Z_1,\ldots Z_d \rangle$ gives the function 
$f(h)=\sum_I c_I x_1^{i_1}\cdots x_d^{i_d}$ where $h=h_1^{x_1}\cdots h_d^{x_d}$. (Here and elsewhere\footnote{There should be no risk of confusing the tuple $I$ with the pro-$p$ Iwahori.} in this article $I$ runs over tuples $I=(i_1,\ldots,i_d)\in \N_0^d$ and the coefficients $c_I$ form a null-sequence in $K$ as $|I|=i_1+\cdots+i_d\rightarrow \infty$.) By \cite[Cor.~5.8]{Sch11} for instance, the power series is determined by the function $f$.
The image of the map $\Gamma(\Bbb{H},\OO_{\Bbb{H}})\otimes_{\Q_p}K \hookrightarrow \CC(H,K)$ is our space of rigid analytic functions $\CC^{\rig}(H,K)$. 
For this to make sense we need to check that this space is independent of the choice of basis, so suppose $(h_1',\ldots,h_d')$ is another basis with corresponding rigid analytic group $\Bbb{H}'$ as above. The isomorphism $\Bbb{H} \overset{\sim}{\longrightarrow} \Bbb{H}'$ in Proposition \ref{unique} amounts to an isomorphism
\[
\begin{tikzcd}
   \Gamma(\Bbb{H}',\OO_{\Bbb{H}'})\otimes_{\Q_p}K  \arrow{rr}{\sim} \arrow[swap]{dr}{} & &  \Gamma(\Bbb{H},\OO_{\Bbb{H}})\otimes_{\Q_p}K  \arrow{dl}{} \\[10pt]
    & \CC(H,K)
\end{tikzcd}
\]
compatible with comultiplication. The diagram commutes so the images of the two slanted arrows coincide. Furthermore, the isomorphism in the diagram preserves the Gauss norms on the two Tate algebras (in the notation of the proof of Proposition \ref{unique}  one easily checks $\|\Phi_i\|=1$). Thus $\CC^{\rig}(H,K)$ even carries a canonical norm. Let us summarize this.

\begin{defn}\label{rigf}
The space $\CC^{\rig}(H,K)$ consists of all functions $f:H \rightarrow K$ which can be expanded as
$$
f(h_1^{x_1}\cdots h_d^{x_d})=\sum_I c_I x_1^{i_1}\cdots x_d^{i_d}
$$
relative to some or equivalently any basis $(h_1,\ldots,h_d)$ where $c_I \rightarrow 0$ as $|I|\rightarrow \infty$. Under pointwise multiplication $\CC^{\rig}(H,K)$ becomes a $K$-Banach algebra when equipped with the Gauss norm
$$
\|f\|={\sup}_I |c_I|.
$$
Here $|p|=1/p$. The Gauss norm $\|\cdot\|$ is independent of the choice of basis for $(H,\omega)$.
\end{defn}

The group $H$ acts on $\CC(H,K)$ by right (and left) translations. 

\begin{lem}
The $H$-action preserves the subspace $\CC^{\rig}(H,K)$ and the Gauss norm is $H$-invariant.
\end{lem}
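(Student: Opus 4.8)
The plan is to realise right translation by a fixed $g\in H$ as pullback along a rigid analytic automorphism of $\Bbb{H}$, and then use that such automorphisms are isometric for the Gauss norm. Fix $g\in H$ and let $(c_1,\ldots,c_d)\in\Z_p^d$ be its coordinates, so that $g=h_1^{c_1}\cdots h_d^{c_d}$. By Proposition \ref{riggrp}(b) this exhibits $g$ as a $\Q_p$-rational point of $\Bbb{H}$, i.e. a morphism $\{g\}\colon\Sp(\Q_p)\to\Bbb{H}$ corresponding to the $\Q_p$-algebra homomorphism $Z_i\mapsto c_i$. Right translation by $g$ is then the composite
$$
r_g\colon\ \Bbb{H}\ \cong\ \Bbb{H}\times_{\Sp(\Q_p)}\Sp(\Q_p)\ \xrightarrow{\ \Id_{\Bbb{H}}\times\{g\}\ }\ \Bbb{H}\times_{\Sp(\Q_p)}\Bbb{H}\ \xrightarrow{\ m\ }\ \Bbb{H},
$$
with $m$ the multiplication of Proposition \ref{riggrp}; it is a morphism of rigid analytic spaces over $\Q_p$, an isomorphism with inverse $r_{g^{-1}}$ by associativity of $m$, and on $\Q_p$-points it is $h\mapsto hg$ under $\Bbb{H}(\Q_p)=H$. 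Unwinding the construction of $m$, the induced $\Q_p$-algebra automorphism $r_g^*$ of $\Gamma(\Bbb{H},\OO_{\Bbb{H}})=\Q_p\langle Z_1,\ldots,Z_d\rangle$ sends $Z_i$ to $F_i(Z_1,\ldots,Z_d,c_1,\ldots,c_d)$, which lies in $\Z_p\langle Z_1,\ldots,Z_d\rangle$ since $F_i\in\Z_p\langle X,Y\rangle$ and $(c_1,\ldots,c_d)\in\Z_p^d$.

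Base changing to $K$ gives a $K$-algebra automorphism $r_g^*$ of $K\langle Z_1,\ldots,Z_d\rangle=\Gamma(\Bbb{H},\OO_{\Bbb{H}})\otimes_{\Q_p}K$. It is isometric for the Gauss norm: both $r_g^*$ and $r_{g^{-1}}^*=(r_g^*)^{-1}$ send the generators $Z_i$ into the unit ball $\OO_K\langle Z_1,\ldots,Z_d\rangle=\{\|\cdot\|\le 1\}$ and hence carry that ball into itself, so $r_g^*$ restricts to a bijection of it; since $\|f\|\in|K^\times|$ for $f\neq 0$, scaling by $K^\times$ together with $K$-linearity then forces $\|r_g^*f\|=\|f\|$ for all $f$. (Equivalently: any isomorphism of $K$-affinoid algebras preserves the spectral seminorm, which on the reduced Tate algebra is the Gauss norm.) Now if $f\in\CC^{\rig}(H,K)$ has associated $\tilde f\in\Gamma(\Bbb{H},\OO_{\Bbb{H}})\otimes_{\Q_p}K$, then evaluating at $\Q_p$-points gives $(r_g^*\tilde f)(h)=\tilde f(hg)=f(hg)=(gf)(h)$ for all $h\in H$; hence $gf$ again lies in $\CC^{\rig}(H,K)$ and $\|gf\|=\|r_g^*\tilde f\|=\|\tilde f\|=\|f\|$. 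Left translations are handled identically, with $\ell_g=m\circ(\{g\}\times\Id_{\Bbb{H}})$ in place of $r_g$.

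The only steps requiring any care are the identification of the abstract translation action with the pullback $r_g^*$ (which is immediate once one passes to $\Q_p$-points, since the embedding $\CC^{\rig}(H,K)\hookrightarrow\CC(H,K)$ is by evaluation on $H=\Bbb{H}(\Q_p)$) and the isometry of $r_g^*$; everything else is formal manipulation with the formal group law already produced in Proposition \ref{riggrp}.
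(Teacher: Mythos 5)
Your proof is correct and follows essentially the same route as the paper: both arguments hinge on the fact that the formal group law $F_i$ has coefficients in $\Z_p$, so that right translation by $g=h_1^{c_1}\cdots h_d^{c_d}$ substitutes $Z_i\mapsto F_i(Z,c)\in\Z_p\langle Z\rangle$, which preserves the Tate algebra and is submultiplicative for the Gauss norm, with equality forced by applying the same bound to $g^{-1}$. You merely package this as pullback along the rigid automorphism $r_g$ and replace the paper's one-line "$\le$ then arbitrariness gives $=$" by the unit-ball/scaling (or spectral-seminorm) argument, which is a cosmetic rather than substantive difference.
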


\begin{proof}
We right translate $f \in \CC^{\rig}(H,K)$ by an element $h_0=h_1^{a_1}\cdots h_d^{a_d}$. The function $h_0f$ has the form 
$$
(h_0f)(h_1^{x_1}\cdots h_d^{x_d})=f(h_1^{x_1}\cdots h_d^{x_d}\cdot h_1^{a_1}\cdots h_d^{a_d})=\sum_I c_I F_1(x,a)^{i_1}\cdots F_d(x,a)^{i_d}
$$
where $a=(a_1,\ldots,a_d)$. Recall the power series $F_i$ introduced in the proof of Proposition \ref{riggrp} have coefficients in $\Z_p$. In particular 
$\sum_I c_I F_1(X,a)^{i_1}\cdots F_d(X,a)^{i_d}$ converges in $K\langle X_1,\ldots X_d \rangle$ and induces $h_0f$, which is therefore in $\CC^{\rig}(H,K)$.
In addition, this argument shows $\|h_0f\|\leq \|f\|$. Since $h_0$ and $f$ are arbitrary we must have equality.
 \end{proof}

\section{Weyl conjugates}

Let $W$ be the Weyl group of $(G,T)$. 
Recall, as noted in the proof of \cite[Lem.~2.3]{OS19} for instance, that multiplication defines an injection
$$
\prod_{\alpha \in \Phi^-} U_{\alpha}\times T \times \prod_{\alpha \in \Phi^+} U_{\alpha} \hookrightarrow G
$$
where the products are ordered arbitrarily. We conjugate it by $w \in W$ (or rather a representative in the normalizer of $T$) which shows the multiplication map
$$
\prod_{\alpha \in \Phi^-} U_{w\alpha}\times T \times \prod_{\alpha \in \Phi^+} U_{w\alpha} \hookrightarrow G
$$
remains an injection. Here $U_{w\alpha}=wU_{\alpha}w^{-1}$ and $(w\alpha)(t)=\alpha(w^{-1}tw)$. 

\begin{defn}
For $w \in W$ introduce the two subgroups
\begin{itemize}
\item $U_w^-=\prod_{\alpha \in \Phi^-} (I \cap U_{w\alpha})$
\item $U_w^+=\prod_{\alpha \in \Phi^+} (I \cap U_{w\alpha})$
\end{itemize}
for some ordering of $\Phi^-$ and $\Phi^+$. Note that $I \cap U_{w\alpha}=\begin{cases}
\text{$U_{w\alpha,1}$ if $w\alpha \in \Phi^-$} \\
\text{$U_{w\alpha,0}$ if $w\alpha \in \Phi^+$.} 
\end{cases}$
\end{defn}

\begin{lem}\label{weyl}
Assume the derived group ${\bf{G}}^{\text{der}}$ is simply connected. Then the multiplication map gives a homeomorphism
$$
U_w^- \times T^1 \times U_w^+ \overset{\sim}{ \longrightarrow} I
$$
for all $w \in W$. In particular $U_w^{\pm}=I \cap w\big(\prod_{\alpha \in \Phi^{\pm}}U_{\alpha}\big)w^{-1}$. Furthermore, when $p-1>eh$ the $p$-valuation $\omega$ on $I$ from Proposition \ref{char}
is compatible with the above factorization for all $w$ (see Definition \ref{compatible}). 
\end{lem}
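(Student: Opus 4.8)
The plan is to prove this exactly as the Iwahori factorization (\ref{iwahori}) is proved, with the standard positive system $\Phi^+$ replaced by $w\Phi^+$. The key points are purely combinatorial: $w\Phi^-$ and $w\Phi^+$ are closed subsets of $\Phi$ partitioning it, with $w\Phi^+=-(w\Phi^-)$ and $w\Phi^-\cap(-w\Phi^-)=\emptyset$ (a root $\beta$ and $-\beta$ cannot both lie in $w\Phi^-$, since exactly one of $w^{-1}\beta$, $-w^{-1}\beta$ is negative); thus $(w\Phi^-,w\Phi^+)$ is the pair of negative/positive roots attached to the chamber $w$. Moreover the level function recording $I\cap U_{\beta}=U_{\beta,f(\beta)}$, namely $f(\beta)=1$ for $\beta\in\Phi^-$ and $f(\beta)=0$ for $\beta\in\Phi^+$, restricts to a concave function on each of $w\Phi^{\pm}$: the only inequality to verify, $f(\beta+\gamma)\leq f(\beta)+f(\gamma)$ for $\beta,\gamma,\beta+\gamma$ all in $w\Phi^-$ (resp. $w\Phi^+$), is immediate in every sign case. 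Granting this, the results of \cite{OS19} underlying (\ref{iwahori}) -- \cite[Lem.~2.1(i)]{OS19} together with the torus bookkeeping in the proof of \cite[Lem.~2.3]{OS19}, and the fact that $T^1$ normalizes every $U_{\beta,r}$ since $\beta(T^1)\subset 1+\m$ -- apply verbatim: the ordered products $U_w^{\pm}=\prod_{\alpha\in\Phi^{\pm}}(I\cap U_{w\alpha})$ are subgroups independent of the orderings, and multiplication gives a homeomorphism $U_w^-\times T^1\times U_w^+\overset{\sim}{\longrightarrow}I$.

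For the identification $U_w^{\pm}=I\cap w(\prod_{\alpha\in\Phi^{\pm}}U_{\alpha})w^{-1}$, recall that $\prod_{\alpha\in\Phi^{\pm}}U_{\alpha}$ is a subgroup $N^{\pm}$ of $G$, namely the unipotent radical of the Borel with root system $\Phi^{\pm}$, so $wN^{\pm}w^{-1}=\prod_{\alpha\in\Phi^{\pm}}U_{w\alpha}$ is again a subgroup. The inclusion $U_w^{\pm}\subset I\cap wN^{\pm}w^{-1}$ is clear. Conversely, given $g\in I\cap wN^-w^{-1}$, factor $g=g^-g_0g^+$ by the homeomorphism above; since $g^-\in U_w^-\subset wN^-w^{-1}$, we get $g_0g^+=(g^-)^{-1}g\in wN^-w^{-1}$, while also $g_0g^+\in T\cdot wN^+w^{-1}$. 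As the multiplication map $wN^-w^{-1}\times T\times wN^+w^{-1}\hookrightarrow G$ is injective -- noted just before the Definition above -- we conclude $wN^-w^{-1}\cap T\cdot wN^+w^{-1}=\{1\}$, whence $g_0g^+=1$ and $g=g^-\in U_w^-$; the $+$ case is symmetric.

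Finally, for the compatibility of $\omega$ with this factorization I would argue as follows. Recall that $\omega$ is obtained from the congruence $p$-valuation on $K_r$ via a pair $(E,t)$ as in Lemma \ref{Et}, so $\omega(h)=\frac{1}{e_E}\sup\{n:tht^{-1}\in K_n\}$ for $h\in I$. Conjugating the $w$-factorization of $I$ by $t$, which normalizes each $U_{\beta}$, presents $tIt^{-1}$ as the ordered product $\prod_{\alpha\in\Phi^-}U_{w\alpha,m}^E\cdot T^1\cdot\prod_{\alpha\in\Phi^+}U_{w\alpha,m}^E$, where $t(I\cap U_{w\alpha})t^{-1}=U_{w\alpha,m}^E$ with $m=m_{w\alpha}$ the congruence level read off from the proof of Proposition \ref{char}. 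On the other hand each $K_n$ likewise has an Iwahori factorization with respect to $w\Phi^+$, namely $K_n=\prod_{\alpha\in\Phi^-}U_{w\alpha,n}^E\cdot T_E^n\cdot\prod_{\alpha\in\Phi^+}U_{w\alpha,n}^E$ (again \cite[Lem.~2.1(i)]{OS19} applied to the constant level $n$, the torus handled as before). Now write $g=u^-t_1u^+\in I$ with $u^{\pm}\in U_w^{\pm}$ and $t_1\in T^1$: the three factors $tu^-t^{-1}$, $t_1=tt_1t^{-1}$, $tu^+t^{-1}$ of $tgt^{-1}$ are already in the $w\Phi^+$-factorized form, so by uniqueness of that factorization $tgt^{-1}\in K_n$ if and only if $tu^-t^{-1}$, $t_1$, $tu^+t^{-1}$ all lie in $K_n$, i.e. if and only if $\min\{\omega(u^-),\omega(t_1),\omega(u^+)\}\geq n/e_E$. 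Taking the supremum over $n$ yields $\omega(g)=\min\{\omega(u^-),\omega(t_1),\omega(u^+)\}$, as required.

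The single substantive ingredient is the Bruhat--Tits fact packaged in \cite[Lem.~2.1(i)]{OS19}: for a closed set $\Psi\subset\Phi$ with $\Psi\cap(-\Psi)=\emptyset$ and a concave level function, the product of the corresponding root subgroups in any order is a subgroup admitting a unique such factorization. Everything else above is bookkeeping -- checking closedness, concavity, and that conjugation by $t$ rescales each $\omega(u_{\beta}(\cdot))$ by a constant depending only on $\beta$, hence preserves the relevant minima -- and the hypothesis that ${\bf{G}}^{\text{der}}$ is simply connected enters only through the \cite{OS19} statements we invoke, exactly as it does for (\ref{iwahori}).
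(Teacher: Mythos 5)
Your proof is correct and its overall architecture matches the paper's: both reduce the homeomorphism to a Bruhat--Tits--type Iwahori factorization valid for an arbitrary ordering of the roots (the paper cites \cite[Lem.~3.3.2]{OS10} for the full Iwahori $J$ and then descends to $I$; you invoke \cite[Lem.~2.1(i)]{OS19} directly, after checking closedness of $w\Phi^{\pm}$ and concavity of the Iwahori level function $f$), and both handle $U_w^{\pm}=I\cap wN^{\pm}w^{-1}$ by the injectivity of the big-cell map.

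Where you genuinely diverge is the compatibility of $\omega$. You go back to the definition $\omega(g)=\tfrac{1}{e_E}\sup\{n:tgt^{-1}\in K_n\}$, observe that $K_n$ also has a $w$-ordered factorization with uniqueness, and conclude directly by matching factors --- in effect re-running the proof of Proposition~\ref{char}(a) with the $w$-ordering of $\Phi$. The paper instead gives a one-line abstract argument: it appeals to conjugation-invariance of $p$-valuations (equivalently, the commutator axiom $\omega([g,h])\geq\omega(g)+\omega(h)$), which shows that, for any bijective factorization $H_1\cdots H_s$ of a $p$-valued group, the property $\omega(h_1\cdots h_s)=\min_i\omega(h_i)$ is unchanged under permuting the tuple --- because permuting a product modifies it by iterated commutators of strictly larger $\omega$. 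Your route is more explicit and self-contained (everything is visible at the level of congruence subgroups), at the cost of re-invoking the $(E,t)$ machinery for each $w$; the paper's route is cleaner once the $w=1$ case is in hand and isolates the permutation-invariance of compatibility as a general feature of $p$-valuations, but it leaves the reader to supply the short commutator estimate that justifies it. Both are valid, and it is worth knowing the abstract version, as it applies verbatim in any $p$-valued group without reference to the ambient reductive group.

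One small imprecision in your write-up: you describe $t(I\cap U_{w\alpha})t^{-1}$ as a subgroup $U^E_{w\alpha,m}$ with $m$ ``read off from the proof of Proposition~\ref{char}''; strictly, $v_E((w\alpha)(t))$ need not be an integer, so $t(I\cap U_{w\alpha})t^{-1}=u^E_{w\alpha}((w\alpha)(t)\m^{f(w\alpha)})$ is a congruence subgroup only after rounding. This does not affect the argument, since what you actually use is only the criterion $tu t^{-1}\in U^E_{w\alpha,n}$ for each $n$, which is a valuation inequality and needs no integrality. It would read more cleanly to phrase it that way rather than asserting equality with some $U^E_{w\alpha,m}$.
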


\begin{proof}
The previous discussion shows the multiplication map
$$
\prod_{\alpha \in \Phi^-} (I \cap U_{w\alpha}) \times T^1 \times \prod_{\alpha \in \Phi^+} (I \cap U_{w\alpha}) \longrightarrow I
$$
is injective. It is also surjective, as we now explain. Since we are assuming ${\bf{G}}^{\text{der}}$ is simply connected, Bruhat-Tits theory shows that the full Iwahori subgroup $J \supset I$ has a factorization
$$
T^0 \times \prod_{\alpha \in \Phi} (J \cap U_{\alpha})\overset{\sim}{ \longrightarrow} J
$$
for any ordering of $\Phi$, see \cite[Lem.~3.3.2]{OS10} and the references given in the proof thereof. For full disclosure those references to \cite[Sect.~3.1.1]{Tit79} require ${\bf{G}}$ to be semisimple and simply connected, but one can easily reduce the general reductive case to the semisimple case by observing that all root groups lie in the derived group.

Clearly $J \cap U_{\alpha}$ are pro-$p$ groups (subgroups of 
$\m^{-N}$ for large enough $N$) so  $J \cap U_{\alpha} \subset I$ and we therefore also have the analogous factorization of pro-$p$ Iwahori $I$ itself
$$
T^1 \times \prod_{\alpha \in \Phi} (I \cap U_{\alpha})\overset{\sim}{ \longrightarrow} I.
$$
We order $\Phi$ as follows. The first batch of roots is $w\Phi^-$ (in some order) and the second is $w\Phi^+$. Since $T^1$ normalizes each $I \cap U_{\alpha}$ this gives the surjectivity.

The compatibility of $\omega$ follows from the case $w=1$ by noting that any $p$-valuation is invariant under conjugation, see \cite[p.~169]{Sch11}.
\end{proof}

From now on we will assume ${\bf{G}}^{\text{der}}$ is simply connected and $p-1>eh$. We do not know whether Lemma \ref{weyl} continues to hold without the simply connectedness assumption. The discussion in \cite{Tit79} indicates that even the definition of parahoric subgroups is more subtle in that case.

\section{Rigid induction and slopes}\label{slp}

Start with a continuous character $\chi: T^1 \rightarrow K^{\times}$ and consider the conjugate $(w\chi)(t)=\chi(w^{-1}tw)$ for $w \in W$. Extend $w\chi$ to a character of 
$U_w^-T^1$ by making it trivial on $U_w^-$ and introduce the rigid analytic principal series representation
\begin{equation}\label{ind}
\Ind_{U_w^-T^1}^I(w\chi)=\{f \in \CC^{\rig}(I,K): f(bg)=(w\chi)(b)f(g) \y \forall b \in U_w^-T^1\}.
\end{equation}
This is a closed subspace of $\CC^{\rig}(I,K)$ because $|f(g)|\leq \|f\|_{\text{sup}}\leq \|f\|$ for all $g \in I$. Here $\|f\|_{\text{sup}}$ is the supremum norm of $f$ on $I$. The group $I$ acts by right translations. Of course, for the space in (\ref{ind}) to be non-trivial we require that $\chi$ is rigid when viewed as a $K$-valued function $T^1\rightarrow K^{\times}\hookrightarrow K$. In the sequel we will make this assumption: $\chi \in \CC^{\rig}(T^1,K)$.

\begin{lem}\label{restrict}
Let $\chi: T^1 \rightarrow K^{\times}$  be a rigid analytic character. Then the restriction map 
$f \mapsto f|_{U_w^+}$ gives an isomorphism of $U_w^+$-representations
$$
\Ind_{U_w^-T^1}^I(w\chi) \overset{\sim}{ \longrightarrow} \CC^{\rig}(U_w^+,K)
$$
preserving the Gauss norms. 
\end{lem}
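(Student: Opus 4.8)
The plan is to trivialize the induced representation by restricting a function to the ``big cell'' $U_w^+$, exploiting the factorization $U_w^-\times T^1\times U_w^+\xrightarrow{\sim}I$ of Lemma \ref{weyl} together with the compatibility of $\omega$ with it. First I would set up split coordinates: by Lemma \ref{weyl} and the remark following Definition \ref{compatible}, concatenating ordered bases of $U_w^-$, $T^1$ and $U_w^+$ (in that order) produces an ordered basis $(h_1,\dots,h_d)$ of $(I,\omega)$ with the property that the coordinates of $g\in I$ relative to $(h_1,\dots,h_d)$ are exactly the concatenation of the coordinates of $u^-$, $t$, $u^+$ in the unique factorization $g=u^-tu^+$. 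Consequently $\CC^{\rig}(I,K)$ is precisely the space of convergent power series $\sum_{i,j,k}c_{ijk}\,y^iz^jx^k$ in these three blocks of coordinates, where $y$ records $u^-$, $z$ records $t$, and $x$ records $u^+$.

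Next I would record two properties of the twisted character. First, $w\chi$ is again rigid on $T^1$: conjugation by a representative $\dot w\in N(T)$ is an automorphism of $T^1$ that extends to an $L$-automorphism of ${\bf{T}}$, hence preserves the congruence filtration $(T^n)$ of $T$ and therefore $\omega|_{T^1}$ by Proposition \ref{char}(c); it thus carries an ordered basis of $T^1$ to another ordered basis, and $w\chi=\chi\circ(\text{conjugation by }\dot w)$ has the same expansion relative to the two bases, so $w\chi\in\CC^{\rig}(T^1,K)$ with $\|w\chi\|=\|\chi\|$. Second, every rigid character $\psi\colon T^1\to K^\times$ has Gauss norm $1$: fixing a basis, the group law of $T^1$ is given by a formal group law $\underline F$ with coefficients in $\Z_p$ (Proposition \ref{riggrp}), and the character identity becomes $\psi(\underline F(X,Y))=\psi(X)\psi(Y)$ in $K\langle X,Y\rangle$; the right-hand side has Gauss norm $\|\psi\|^2$ by multiplicativity of the Gauss norm, while the left-hand side has Gauss norm $\le\|\psi\|$ because the components of $\underline F$ have coefficients in $\Z_p$, so $\|\psi\|^2\le\|\psi\|$, and $\|\psi\|\ge|\psi(1)|=1$ forces equality.

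The remainder is bookkeeping against the split-coordinate description. For injectivity, the transformation property gives $f(u^-tu^+)=(w\chi)(t)\,f(u^+)$, so $f$ is determined by $f|_{U_w^+}$; and $f|_{U_w^+}$ is obtained from the power series for $f$ by setting the $y$- and $z$-coordinates to zero, hence lies in $\CC^{\rig}(U_w^+,K)$. The restriction map is $U_w^+$-equivariant for the right translation action since $U_w^+$ is a subgroup. For surjectivity, given $h\in\CC^{\rig}(U_w^+,K)$ I would define $f(u^-tu^+):=(w\chi)(t)\,h(u^+)$; in split coordinates this is the product of a convergent series in the $z$-block (namely $w\chi$, of Gauss norm $1$) with a convergent series in the $x$-block (namely $h$), hence a convergent series in $(y,z,x)$ that does not involve $y$, so $f\in\CC^{\rig}(I,K)$ and $\|f\|=\|w\chi\|\,\|h\|=\|h\|$ by multiplicativity of the Gauss norm. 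That $f$ lies in $\Ind_{U_w^-T^1}^I(w\chi)$ follows because $T^1$ normalizes $U_w^-$ (Lemma \ref{weyl}), so for $b=u_0^-t_0\in U_w^-T^1$ one has $bg=\big(u_0^-\,t_0u^-t_0^{-1}\big)(t_0t)\,u^+$, a factorization of the same type, whence $f(bg)=(w\chi)(t_0t)h(u^+)=(w\chi)(b)f(g)$. Since $f|_{U_w^+}=h$, the restriction map is a surjective, norm-preserving, $U_w^+$-equivariant linear map between $K$-Banach spaces, hence the desired isomorphism.

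The step I expect to be the main obstacle is the norm statement in the second paragraph: that a rigid character of $T^1$ necessarily has Gauss norm $1$, together with the companion fact that $w\chi$ remains rigid. Neither follows formally from the definition of $\CC^{\rig}$; both rely on input from the earlier sections — the conjugation-invariance of $\omega$ through its explicit description in Proposition \ref{char}(c), and the $\Z_p$-integral formal group law underlying the rigid group structure from Section \ref{analytic}. Once these are in place, the isomorphism is read off directly from the split-coordinate presentation of $\CC^{\rig}(I,K)$, with no matrix computations required.
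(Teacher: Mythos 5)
Your proof is correct and proceeds along the same lines as the paper's: use the split coordinates supplied by Lemma \ref{weyl} and Remark \ref{promote} to read off the bijection, and reduce norm preservation to $\|w\chi\|=1$ via multiplicativity of the Gauss norm. The one place you diverge is the sub-argument for $\|w\chi\|=1$: the paper simply notes $\|\chi\|\geq\|\chi\|_{\sup}=1$ (compactness of $T^1$ forces $\chi(T^1)\subset\OO_K^{\times}$), applies the same to $\chi^{-1}$, and concludes from $\|\chi\|\,\|\chi^{-1}\|=\|1\|=1$; your functional-equation argument $\psi(\underline F(X,Y))=\psi(X)\psi(Y)$ using the $\Z_p$-integrality of the group law is equally valid but somewhat more elaborate, and your explicit verification that $w\chi$ remains rigid (via $W$-invariance of the congruence filtration and Proposition \ref{char}(c)) makes visible a point the paper leaves implicit.
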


\begin{proof}
By promoting the factorization in Lemma \ref{weyl} to an isomorphism of rigid spaces as in Remark \ref{promote} one can immediately extend a function in $\CC^{\rig}(U_w^+,K)$
to a function in $\CC^{\rig}(I,K)$ with the required transformation property. 

For the comment on norms, as Gauss norms are multiplicative it suffices to note that $\|\chi\|=1$. Indeed, we certainly have $\|\chi\| \geq \|\chi\|_{\text{sup}}=1$. Applying this to 
$\chi^{-1}$ we find that $\|\chi\|=1$.
\end{proof}

Next we transfer the $T^1$-action on $\Ind_{U_w^-T^1}^I(w\chi)$ across the isomorphism in Lemma \ref{restrict} to an action on $\CC^{\rig}(U_w^+,K)$. It is given by 
the formula
$$
(tf)(u)=(w\chi)(t)f(t^{-1}ut)
$$
for $t \in T^1$ and $u \in U_w^+$. In order to do explicit calculations we choose coordinates on $U_w^+$, but first we introduce a bookkeeping symbol.

\begin{defn}
For all roots $\beta$ we let $\varpi_{\beta}=\begin{cases}
\text{$\varpi$ if $\beta \in \Phi^-$} \\
\text{$1$ if $\beta \in \Phi^+$.} 
\end{cases}$
\end{defn}

\begin{notn}
With this symbol $(u_{w\alpha}(\varpi_{w\alpha}b_1),\ldots,u_{w\alpha}(\varpi_{w\alpha}b_{\ell}))$ is an ordered basis for $I \cap U_{w\alpha}$ for all $\alpha \in \Phi^+$. Recall that 
$(b_1,\ldots,b_{\ell})$ is a choice of $\Z_p$-basis for $\OO$, cf. the proof of Proposition \ref{char}. Letting $\alpha$ vary gives a basis for $U_w^+$. Once and for all we label the positive roots as $\Phi^+=\{\alpha_1,\ldots,\alpha_N\}$. Then our basis for $U_w^+$ is the string
$$
(u_{w\alpha_1}(\varpi_{w\alpha_1}b_1),\ldots,u_{w\alpha_1}(\varpi_{w\alpha_1}b_{\ell})) \y \ldots \y (u_{w\alpha_{N}}(\varpi_{w\alpha_{N}}b_1),\ldots,u_{w\alpha_{N}}(\varpi_{w\alpha_{N}}b_{\ell})).
$$
This lets us identify $\CC^{\rig}(U_w^+,K)$ with the space of rigid functions on the ball $\Z_p^{N\ell}$ in terms of the corresponding $\ell$-tuples of variables
$$
z_1=(z_{11},\ldots,z_{1\ell}) \y \ldots \y z_N=(z_{N1},\ldots,z_{N\ell}).
$$
Our power series expansions $f(z_1,\ldots,z_N)=\sum_I c_I z^I$ are indexed by tuples $I=(i_1,\ldots,i_N)$ where
$$
i_1=(i_{11},\ldots,i_{1\ell}) \y \ldots \y i_N=(i_{N1},\ldots,i_{N\ell}),
$$
and we adopt the following notation:
\begin{itemize}
\item $z^I=z_1^{i_1}\cdots z_N^{i_N}=z_{11}^{i_{11}}\cdots z_{N\ell}^{i_{N\ell}}$,
\item $|I|=|i_1|+\cdots+|i_N|=i_{11}+\cdots+i_{N\ell}$.
\end{itemize}
\end{notn}

By choosing a Chevalley basis we get a model for ${\bf{G}}$ over $\Z$. In particular ${\bf{G}}={\bf{G}}_0\times_{\Q_p}L$ for some connected reductive group 
${\bf{G}}_0$ split over $\Q_p$. Similarly ${\bf{T}}={\bf{T}}_0\times_{\Q_p}L$ for a split $\Q_p$-torus ${\bf{T}}_0 \subset {\bf{G}}_0$. We let $T_0={\bf{T}}_0(\Q_p)$. The maximal compact subgroup of $T_0$ will be denoted by $T_0^0$, and $T_0^1$ is the Sylow pro-$p$-subgroup of $T_0^0$. The main reason for passing to the $\Q_p$-groups ${\bf{G}}_0$ and
${\bf{T}}_0$ is that all roots $\alpha \in \Phi$ are $\Z_p^{\times}$-valued on $T_0^0$, and $\alpha: T_0^1 \rightarrow 1+p\Z_p$. In particular $tu_{\alpha}(x)t^{-1}=u_{\alpha}(x)^{\alpha(t)}$ for $t \in T_0^0$. Note that the right-hand side only makes sense when $\alpha(t) \in \Z_p$, which is why we have to put restrictions on $t$. 

We recommend the reader initially considers the case $L=\Q_p$ and $w=1$ at a first reading, which simplifies the notation tremendously. 

\bigskip

In these coordinates the $T_0^1$-action on $\CC^{\rig}(U_w^+,K)$ is given by
$$
(tf)(z_1,\ldots,z_N)=(w\chi)(t)\cdot f\big((w\alpha_1)(t^{-1})z_1,\ldots, (w\alpha_N)(t^{-1})z_N \big).
$$
We will differentiate this and get a $\Lie(T_0^1)$-module structure. Pick an $H \in \Lie(T_0^1)$ (there is no risk of confusion with the use of the letter $H$ in Section \ref{analytic}). 
The exponential map $\gamma: s \mapsto e^{sH}$ is defined on a small enough open ball $\Omega \subset \Z_p$ around $0$. This gives a locally $\Q_p$-analytic homomorphism 
$\gamma: \Omega \rightarrow T_0^1$ such that $\gamma(0)=1$ is the identity and $\gamma'(0)=\Lie(\gamma)(1)=H$. Thus $H$ sends $f$ to\footnote{We apologize for the double meaning of $s$ in this calculation.} 
\begin{align*}  
(Hf)(z_1,\ldots,z_N) &= \frac{d}{ds}(e^{sH}f)(z_1,\ldots,z_N)|_{s=0} \\  
 &= \frac{d}{ds} \big[(w\chi)(e^{sH})\cdot  f\big((w\alpha_1)(e^{-sH})z_1,\ldots, (w\alpha_N)(e^{-sH})z_N \big)\big]|_{s=0} \\
 &=d(w\chi)(H)f(z_1,\ldots,z_N)-\sum_{r=1}^N \sum_{s=1}^{\ell} d(w\alpha_r)(H)z_{rs}\frac{\partial f}{\partial z_{rs}}(z_1,\ldots,z_N),
\end{align*} 
by the product rule and the chain rule; see \cite[Lem.~9.5, Lem.~9.12]{Sch11}. Let us also emphasize that $d(w\chi)$ means the derivative at the identity of $w\chi$ viewed as a function $T_0^1 \rightarrow K$; see the definition in \cite[p.~59]{Sch11}. Similarly $d(w\alpha_r)$ is the derivative (at the identity) of the conjugate root $w\alpha_r$ viewed as a function $T_0^1 \rightarrow \Q_p^{\times}$.

\begin{lem}\label{eigenvalue}
The monomial $f_I(z_1,\ldots,z_N)=z^I$ is an eigenfunction for the $\Lie(T_0^1)$-action with eigensystem
$$
H \longmapsto d(w\chi)(H)-\sum_{r=1}^N d(w\alpha_r)(H)|i_r|.
$$
Any vector in $\Lie(T_0^1)$ acts continuously on $\CC^{\rig}(U_w^+,K)$.
\end{lem}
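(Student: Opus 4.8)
The plan is to read off everything from the formula for $(Hf)$ established immediately before the lemma. For the eigenfunction claim, first I would specialize that formula to $f=f_I=z^I$. Since $z_{rs}\frac{\partial}{\partial z_{rs}}z^I=i_{rs}z^I$, summing over $s=1,\dots,\ell$ gives $\sum_{s=1}^{\ell}z_{rs}\frac{\partial f_I}{\partial z_{rs}}=|i_r|\,f_I$, hence
$$
(Hf_I)=\Big(d(w\chi)(H)-\sum_{r=1}^{N}d(w\alpha_r)(H)\,|i_r|\Big)f_I,
$$
which is exactly the asserted eigenvalue. This part is a one-line specialization and presents no difficulty.

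For the continuity statement I would expand an arbitrary $f=\sum_I c_I z^I\in\CC^{\rig}(U_w^+,K)$ and apply the same pointwise formula termwise. This is legitimate because both operators $f\mapsto f$ and $f\mapsto z_{rs}\frac{\partial f}{\partial z_{rs}}$ have Gauss norm $\le 1$ on $\CC^{\rig}(U_w^+,K)$ (the latter replaces $c_I$ by $i_{rs}c_I$ with $|i_{rs}c_I|\le|c_I|$), so the termwise-computed series converges in $\CC^{\rig}(U_w^+,K)$ and coincides with the function $Hf$ produced above. Consequently $H$ acts diagonally: $Hf=\sum_I c_I\,\lambda_I(H)\,z^I$, where $\lambda_I(H):=d(w\chi)(H)-\sum_{r=1}^{N}d(w\alpha_r)(H)|i_r|$ is the eigenvalue from the first part. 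The crucial observation is that each $|i_r|=i_{r1}+\cdots+i_{r\ell}$ is a non-negative rational integer, so $\big||i_r|\big|\le 1$ in $\Q_p$; by the ultrametric inequality
$$
|\lambda_I(H)|\le\max\big\{|d(w\chi)(H)|,\,|d(w\alpha_1)(H)|,\,\dots,\,|d(w\alpha_N)(H)|\big\}=:C_H,
$$
a finite constant independent of $I$. Therefore $\|Hf\|=\sup_I|c_I\lambda_I(H)|\le C_H\sup_I|c_I|=C_H\|f\|$ (and $c_I\lambda_I(H)\to 0$ as $|I|\to\infty$, so indeed $Hf\in\CC^{\rig}(U_w^+,K)$), which shows $H$ is a bounded — hence continuous — linear operator on the $K$-Banach space $\CC^{\rig}(U_w^+,K)$.

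The only step that requires any thought is the last one: at first glance the eigenvalues $\lambda_I(H)$ look unbounded because they involve the arbitrarily large quantities $|i_r|$, but since these are ordinary integers their $p$-adic absolute values are $\le 1$, so the whole family $\{\lambda_I(H)\}_I$ is uniformly bounded and $H$ is automatically continuous. Everything else is a routine specialization or a termwise $p$-adic estimate, and no input beyond the displayed formula preceding the lemma is needed.
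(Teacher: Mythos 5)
Your proof is correct and follows the same route as the paper's: specialize the pre-lemma formula to $f_I$ using $z_{rs}\partial_{z_{rs}}z^I=i_{rs}z^I$, and obtain continuity from the uniform boundedness of the eigenvalues, which holds precisely because each $|i_r|$ is a non-negative integer and hence has $p$-adic absolute value at most $1$. You merely spell out the termwise Gauss-norm estimate in more detail than the paper does.
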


\begin{proof}
This is immediate from the above calculation. Indeed $z_{rs}\frac{\partial}{\partial z_{rs}}f_I=i_{rs}f_I$. Continuity follows since the eigenvalues of any $H$ are bounded uniformly in $I$ as 
$|i_r|$ is an integer.
\end{proof}

We think of $w \in W$ as being fixed in the entire discussion, and choose a cocharacter $\mu \in X_*({\bf{T}})$ adapted to $w$ in the sense that $\langle w\alpha,\mu\rangle>0$ for all 
$\alpha \in \Phi^+$. We view $\mu$ as a homomorphism $\mu: 1+p\Z_p \rightarrow T_0^1$ and introduce its derivative $H_{\mu}=\Lie(\mu)(1)$. A simple calculation shows 
$d(w\alpha)(H_{\mu})=\langle w\alpha,\mu\rangle$. In particular the operator 
$$
d(w\chi)(H_{\mu})\Id-H_{\mu}
$$
has eigenvalue $\lambda_I=\sum_{r=1}^N \langle w\alpha_r,\mu \rangle |i_r|$ on $f_I$ by Lemma \ref{eigenvalue}. (Obviously $\lambda_I$ depends on $w$ and $\mu$, but we think of those as being fixed, and we omit them to lighten the notation.) Note that $\lambda_I\in \N_0$, so it makes sense to consider its $p$-adic valuation $v(\lambda_I)$. 

We will explore the spectral theory of the operator $d(w\chi)(H_{\mu})\Id-H_{\mu}$ in more detail. First $C^{\rig}(U_w^+,K)$ has a slope decomposition
\begin{equation}\label{sd}
C^{\rig}(U_w^+,K)=C^{\rig}(U_w^+,K)^{< s} \oplus C^{\rig}(U_w^+,K)^{\geq s}
\end{equation}
for all $s \in \N_0$. Here $C^{\rig}(U_w^+,K)^{\geq s}$ is the subspace of functions $f(z_1,\ldots,z_N)=\sum_{I:v(\lambda_I)\geq s} c_Iz^I$. The space $C^{\rig}(U_w^+,K)^{< s}$ is defined similarly. We also consider the space $C^{\rig}(U_w^+,K)^{=s}$ spanned (topologically) by those monomials $z^I$ with $v(\lambda_I)=s$. 

In accordance with (\ref{sd}) we decompose a $K$-valued rigid function on $U_w^+$ as $f=f^{<s}+f^{\geq s}$ where $f^{<s}$ is the projection of $f$ onto 
$C^{\rig}(U_w^+,K)^{< s}$, and similarly for $f^{\geq s}$.

The next observation is inspired by the Hida idempotent in the theory of $p$-adic modular forms. This point of view is one of the main novelties of this paper.

\begin{lem}
For a given $s \in \N_0$ consider the decomposition 
$$
C^{\rig}(U_w^+,K)^{\geq s}=C^{\rig}(U_w^+,K)^{=s} \oplus C^{\rig}(U_w^+,K)^{\geq s+1}.
$$ 
Introduce the operator $\UU_s=\bigg(p^{-s}\big(d(w\chi)(H_{\mu})\Id-H_{\mu}\big)\bigg)^{p-1}$ on $C^{\rig}(U_w^+,K)$. Then
$$
\lim_{n\rightarrow \infty} \UU_s^{n!}
$$
defines the projection map $C^{\rig}(U_w^+,K)^{\geq s}\longrightarrow C^{\rig}(U_w^+,K)^{=s}$.
\end{lem}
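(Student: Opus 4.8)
The plan is to exploit the fact that $\UU_s$ is diagonalized by the monomials $z^I$, to write down its eigenvalues on $C^{\rig}(U_w^+,K)^{\geq s}$ explicitly, and then to reduce the convergence statement to an elementary uniform estimate in $\Z_p$.

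First, using Lemma \ref{eigenvalue} together with the relation $d(w\alpha_r)(H_\mu)=\langle w\alpha_r,\mu\rangle$, one sees that $\big(d(w\chi)(H_\mu)\Id-H_\mu\big)z^I=\lambda_I z^I$ with $\lambda_I=\sum_{r=1}^N\langle w\alpha_r,\mu\rangle|i_r|\in\N_0$, and hence $\UU_s z^I=(p^{-s}\lambda_I)^{p-1}z^I$. On the subspace $C^{\rig}(U_w^+,K)^{\geq s}$ every monomial $z^I$ satisfies $v(\lambda_I)\geq s$, so $m_I:=p^{-s}\lambda_I$ is a non-negative integer and $\UU_s$ acts on $z^I$ by the integer $m_I^{p-1}$; in particular $\UU_s$ restricts to an operator of norm $\leq 1$ on $C^{\rig}(U_w^+,K)^{\geq s}$ preserving both summands of $C^{\rig}(U_w^+,K)^{\geq s}=C^{\rig}(U_w^+,K)^{=s}\oplus C^{\rig}(U_w^+,K)^{\geq s+1}$, the first summand being spanned by the $z^I$ with $m_I\in\Z_p^\times$ and the second by those with $p\mid m_I$ (the constant function, for which $\lambda_0=0$, belongs to the second).

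Next I would invoke Definition \ref{rigf}: the monomials $z^I$ form an orthonormal basis of $C^{\rig}(U_w^+,K)$ for the Gauss norm, hence so does the subset $\{z^I:v(\lambda_I)\geq s\}$ in $C^{\rig}(U_w^+,K)^{\geq s}$. Writing $\pi$ for the projection onto $C^{\rig}(U_w^+,K)^{=s}$ along $C^{\rig}(U_w^+,K)^{\geq s+1}$, the operator $\UU_s^{n!}-\pi$ is again diagonal in this basis, so its operator norm equals the supremum over $I$ of the absolute values of its eigenvalues, which are $m_I^{(p-1)n!}-1$ on $C^{\rig}(U_w^+,K)^{=s}$ and $m_I^{(p-1)n!}$ on $C^{\rig}(U_w^+,K)^{\geq s+1}$. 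It therefore suffices to show these tend to $0$ uniformly in $I$, which is the elementary heart of the argument: if $p\mid m_I$ then $|m_I^{(p-1)n!}|\leq p^{-(p-1)n!}$; and if $m_I\in\Z_p^\times$ then Fermat's little theorem gives $m_I^{p-1}\in 1+p\Z_p$, so (as $p$ is odd) $m_I^{(p-1)n!}=(m_I^{p-1})^{n!}\in 1+p^{1+v(n!)}\Z_p$ and $|m_I^{(p-1)n!}-1|\leq p^{-1-v(n!)}$. Since $v(n!)\to\infty$ and $(p-1)n!\to\infty$, both bounds are $I$-independent null sequences, whence $\|\UU_s^{n!}-\pi\|\to 0$ and $\lim_{n\to\infty}\UU_s^{n!}=\pi$.

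The only point requiring care is this uniformity: taking $\UU_s^{n!}$ rather than $\UU_s^{n}$ (so that $v(n!)\to\infty$) together with the orthonormality of the $z^I$ is exactly what promotes the obvious pointwise convergence on monomials to convergence in operator norm and identifies the limit with the continuous projection $\pi$.
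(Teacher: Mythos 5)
Your proof is correct and follows essentially the same approach as the paper: diagonalize $\UU_s$ on the monomial basis, note $(p^{-s}\lambda_I)^{p-1}\equiv 1\pmod p$ by Fermat when $v(\lambda_I)=s$, and use the exponent $n!$ to force $v(n!)\to\infty$. You are in fact a bit more careful than the paper's write-up, in that you make explicit (via orthonormality of the $z^I$ for the Gauss norm and the uniform bounds $|m_I^{(p-1)n!}|\leq p^{-(p-1)n!}$, $|m_I^{(p-1)n!}-1|\leq p^{-1-v(n!)}$) why the coefficient-wise limits assemble into convergence in operator norm — a point the paper leaves implicit.
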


\begin{proof}
Take some $f(z_1,\ldots,z_N)=\sum_{I:v(\lambda_I)\geq s} c_Iz^I$ belonging to $C^{\rig}(U_w^+,K)^{\geq s}$ and apply the $\UU_s$-operator,
$$
(\UU_sf)(z_1,\ldots,z_N)={\sum}_{I:v(\lambda_I)\geq s} c_I(p^{-s}\lambda_I)^{p-1}z^I.
$$
Doing this $n!$ times multiplies the coefficients $c_I$ by $(p^{-s}\lambda_I)^{(p-1)n!}$. If $v(\lambda_I)>s$ this clearly goes to $0$ as $n \rightarrow \infty$. 
On the other hand, if $v(\lambda_I)=s$ we have $(p^{-s}\lambda_I)^{p-1} \equiv 1$ modulo $p$ by Fermat. Therefore $(p^{-s}\lambda_I)^{(p-1)l}\rightarrow 1$ as $v(l)\rightarrow \infty$.
For $l=n!$ this amounts to letting $n \rightarrow \infty$. 
Consequently
$$
( \UU_s^{n!}f)(z_1,\ldots,z_N) \longrightarrow {\sum}_{I:v(\lambda_I)=s} c_Iz^I
$$
for $n \rightarrow \infty$ as desired.
\end{proof}

This yields a slope decomposition for closed $T_0^1$-invariant subspaces.

\begin{lem}
Let $V \subset C^{\rig}(U_w^+,K)$ be a closed $T_0^1$-invariant subspace. Then $V=V^{<s}\oplus V^{\geq s}$ for all $s \in \N_0$ where 
$$
V^{<s}=V \cap C^{\rig}(U_w^+,K)^{< s} \y \y \y \y \y V^{\geq s}=V \cap C^{\rig}(U_w^+,K)^{\geq s}.
$$
\end{lem}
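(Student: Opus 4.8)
The plan is to reduce the statement to the corresponding splitting of the ambient space $C^{\rig}(U_w^+,K) = C^{\rig}(U_w^+,K)^{<s}\oplus C^{\rig}(U_w^+,K)^{\geq s}$, which is already available from (\ref{sd}), using $T_0^1$-invariance of $V$ to see that $V$ respects this decomposition. The key point is that the projection onto $C^{\rig}(U_w^+,K)^{\geq s}$ (along $C^{\rig}(U_w^+,K)^{<s}$) is realized by a limit of operators in the image of the $T_0^1$-action, so it preserves any closed $T_0^1$-invariant subspace. First I would observe that for each $t\in T_0^1$ the monomial $f_I=z^I$ is an eigenfunction (Lemma \ref{eigenvalue}), and that the operator $D:=d(w\chi)(H_\mu)\Id - H_\mu$ — which acts on $f_I$ by the scalar $\lambda_I\in\N_0$ — preserves $V$, being a limit of operators $s^{-1}(\gamma(s)-\Id)$ acting on $f$ as $s\to 0$ (this is exactly the continuity statement at the end of Lemma \ref{eigenvalue}, applied with $H=H_\mu$); alternatively one can argue directly that $H_\mu$ acts continuously on the closed subspace $V$.

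Next I would produce an operator whose restriction to $C^{\rig}(U_w^+,K)^{\geq s}$ is the projection onto the high-slope part. The natural candidate is built from the two preceding lemmas: on $C^{\rig}(U_w^+,K)^{\geq s}$ the operator $\UU_s = (p^{-s}D)^{p-1}$ has the property that $\lim_{n\to\infty}\UU_s^{n!}$ is the projection onto $C^{\rig}(U_w^+,K)^{=s}$. To isolate the full $C^{\rig}(U_w^+,K)^{\geq s}$ rather than just the $=s$ piece, I would instead work with the complementary idempotent: the operator $e_s := \Id - \lim_{n\to\infty}\big(\Id - \UU_0^{n!}\cdots\text{(telescoping over slopes }0,1,\dots,s-1)\big)$ — more cleanly, one may simply note that each $\UU_j$ (for $j=0,1,\dots,s-1$) is a polynomial in $D$ with $\Z_p$-coefficients, hence preserves $V$, and that a suitable limit of products/iterates of the operators $\Id-\lim_n\UU_j^{n!}$ for $j<s$ equals the projector $C^{\rig}(U_w^+,K)\to C^{\rig}(U_w^+,K)^{\geq s}$. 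Since $D$ preserves the closed subspace $V$, so does every polynomial in $D$ with bounded coefficients, and so does any limit of such operators that converges in the operator norm (or pointwise on the Banach space, using closedness of $V$). Therefore the projector onto $C^{\rig}(U_w^+,K)^{\geq s}$ maps $V$ into $V$, which gives $V = (V\cap C^{\rig}(U_w^+,K)^{<s}) \oplus (V\cap C^{\rig}(U_w^+,K)^{\geq s})$.

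The main obstacle I anticipate is the bookkeeping needed to pass from "projection onto the $=s$ eigen-slope part'' (which the preceding lemma gives directly) to "projection onto the $\geq s$ part,'' together with checking that the relevant limit of operators converges in a mode strong enough to preserve a merely closed subspace — pointwise convergence on a Banach space does preserve closed subspaces, so this should go through, but one must be careful that $\UU_s$ is a genuine bounded operator (its eigenvalues $(p^{-s}\lambda_I)^{p-1}$ lie in $\Z_p$ for $I$ with $v(\lambda_I)\geq s$, and one restricts attention to that subspace) and that the telescoping over the finitely many slopes $0,1,\dots,s-1$ is set up consistently. A cleaner route, which I would actually adopt in the write-up, is: the projector $\pi_{\geq s}$ onto $C^{\rig}(U_w^+,K)^{\geq s}$ acts on $f_I$ by $1$ if $v(\lambda_I)\geq s$ and $0$ otherwise; by a standard $p$-adic interpolation argument (e.g. writing $\pi_{\geq s}$ as a limit of polynomials in $p^{-s}D$ evaluated at the integers $\lambda_I$, using that $\lambda_I\mapsto [v(\lambda_I)\geq s]$ is continuous on $\N_0$ in a form amenable to the preceding lemma's Fermat trick), $\pi_{\geq s}$ is a pointwise limit of polynomials in $D$ with $\Z_p$-coefficients, hence preserves every closed $T_0^1$-invariant subspace. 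The decomposition of $V$ is then immediate.
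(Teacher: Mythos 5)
Your proposal captures the essential mechanism the paper uses: exploit that a closed $T_0^1$-invariant subspace is stable under the derived action of $H_\mu$ and hence under polynomials in $D=d(w\chi)(H_\mu)\Id-H_\mu$, then realize the relevant slope projector as a limit of such operators and invoke closedness of $V$. The paper, however, organizes this as a short induction on $s$: assuming $f^{\geq s}\in V$, it applies the single projector $\lim_n \UU_s^{n!}$ (restricted to the $\geq s$ part, exactly where the Hida-type lemma is stated) to extract $f^{=s}\in V$, and then gets $f^{\geq s+1}=f^{\geq s}-f^{=s}\in V$, which is the inductive step. Your ``telescoping over slopes $0,\dots,s-1$'' is the same argument written as one compound projector, but the induction is cleaner precisely because at each stage you only ever apply $\UU_j$ to vectors already in $C^{\rig}(U_w^+,K)^{\geq j}$, which is the only domain on which that operator and its limit are well-behaved; trying to write the full projector $\pi_{\geq s}$ on the entire space at once forces you to carry this domain bookkeeping explicitly, which is where your write-up gets murky.

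Two small inaccuracies worth flagging. First, $\UU_j=(p^{-j}D)^{p-1}$ does \emph{not} have $\Z_p$-coefficients for $j\geq 1$; this is harmless for preserving the $K$-subspace $V$, but the claim as stated is false. Second, in the ``cleaner route'' you propose polynomials in $p^{-s}D$: note $p^{-s}D$ is not a bounded operator on the whole space (its eigenvalues $p^{-s}\lambda_I$ leave $\Z_p$ when $v(\lambda_I)<s$), so one should instead do Mahler interpolation directly in $D$, whose eigenvalues $\lambda_I$ lie in $\N_0\subset\Z_p$. With that correction, the Mahler route is sound: the locally constant function $\lambda\mapsto[v(\lambda)\geq s]$ on $\Z_p$ has a Mahler expansion with $\Z_p$-coefficients tending to $0$, the truncations give bounded operators $\sum_k a_k\binom{D}{k}$ preserving $V$, and they converge to $\pi_{\geq s}$ uniformly on bounded sets. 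This is a genuinely single-step alternative to the paper's induction, at the cost of invoking Mahler's theorem rather than only the elementary Fermat computation already established.
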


\begin{proof}
It suffices to show $f \in V \Rightarrow f^{\geq s}\in V$ for all $s \in \N_0$. We use induction on $s$. The base case $s=0$ is trivial since $V=V^{\geq 0}$. Suppose $s \geq 0$ and the 
implication holds for $s$. Decompose $f^{\geq s}=f^{=s}+f^{\geq s+1}$. Here 
$$
f^{=s}=\lim_{n\rightarrow \infty} \UU_s^{n!}f^{\geq s}
$$
by the previous Lemma. By the induction hypothesis $f^{\geq s}\in V$. Thus each $\UU_s^{n!}f^{\geq s}\in V$ by $T_0^1$-invariance. Since $V$ is closed $f^{=s}\in V$. We conclude that
also $f^{\geq s+1} \in V$.
\end{proof}


We now consider closed $T_0^1U_w^+$-invariant subspaces.

\begin{prop}
Let $V \subset C^{\rig}(U_w^+,K)$ be a nonzero closed $T_0^1U_w^+$-invariant subspace. Then $V$ contains all the constant functions. 
\end{prop}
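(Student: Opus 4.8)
The plan is to combine $U_w^+$-invariance with the slope decomposition established in the previous lemma. Roughly, I would first translate a nonzero element of $V$ so that it acquires a nonzero constant term, and then recover that constant term as a limit of its slope-truncations, all of which lie in $V$ by that lemma.

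In detail: since $V\neq 0$, choose $0\neq f\in V$. A nonzero element of $\CC^{\rig}(U_w^+,K)$ cannot vanish identically on $U_w^+$, the power series being determined by the function \cite[Cor.~5.8]{Sch11}; so there is $u_0\in U_w^+$ with $f(u_0)\neq 0$. Replacing $f$ by its right translate $u_0f\in V$ (legitimate by $U_w^+$-invariance) we may assume $f(1)\neq 0$, i.e. in the coordinates fixed above $f=\sum_I c_Iz^I$ with $c_0=f(1)\neq 0$. Now, since $V$ is in particular a closed $T_0^1$-invariant subspace, the previous lemma gives $f^{\geq s}\in V$ for every $s\in\N_0$, where $f^{\geq s}=\sum_{I:\,v(\lambda_I)\geq s}c_Iz^I$. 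As $\lambda_0=0$ (so $v(\lambda_0)=\infty$), the constant term always survives: $f^{\geq s}=c_0\cdot 1+\sum_{I\neq 0,\,v(\lambda_I)\geq s}c_Iz^I$. Set $M=\max_{1\leq r\leq N}\langle w\alpha_r,\mu\rangle$, a positive integer because $\mu$ is adapted to $w$. For $I\neq 0$ one has $1\leq\lambda_I=\sum_{r=1}^N\langle w\alpha_r,\mu\rangle|i_r|\leq M|I|$, so $v(\lambda_I)\geq s$ forces $\lambda_I\geq p^s$, hence $|I|\geq p^s/M$. Since $c_I\to 0$ as $|I|\to\infty$, this gives
$$
\|f^{\geq s}-c_0\cdot 1\|=\sup_{I\neq 0,\,v(\lambda_I)\geq s}|c_I|\;\longrightarrow\;0\qquad(s\to\infty).
$$
As $V$ is closed and contains every $f^{\geq s}$, we conclude $c_0\cdot 1\in V$; since $c_0\neq 0$ and $V$ is a $K$-subspace, $V$ contains $1$, hence all constant functions.

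I do not expect a genuine obstacle here: both ingredients are already in hand, and the only computation is the elementary estimate $\lambda_I\leq M|I|$, which converts $p$-divisibility of $\lambda_I$ into largeness of the total degree $|I|$ and lets the null-sequence condition on the $c_I$ finish the job. The one point worth flagging is that the translation step truly needs $U_w^+$-invariance and not merely $T_0^1$-invariance: without translating, one learns only that $f(1)\cdot 1\in V$ for $f\in V$, which is vacuous should every element of $V$ happen to vanish at the identity.
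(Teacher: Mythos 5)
Your proof is correct and follows essentially the same route as the paper's: translate a nonzero element so its constant term $c_0=f(1)$ is nonzero, invoke the preceding lemma to place all slope-truncations $f^{\geq s}$ in $V$, bound $\lambda_I\leq M|I|$ to show $f^{\geq s}\to c_0\cdot 1$, and conclude by closedness of $V$. Your phrasing of the final estimate is a touch more streamlined than the paper's $\epsilon$-$S$-$S'$ bookkeeping, but the argument is the same.
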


\begin{proof}
Pick a nonzero $h \in V$. Then $h(u_0)\neq 0$ for some $u_0 \in U_w^+$. In other words the right $u_0$-translate $f=u_0h \in V$ does not vanish at the identity. Therefore, when expressed in coordinates we have 
$$
f(z_1,\ldots,z_N)=\sum_I c_Iz^I
$$
with $c_0\neq 0$. (The subscript $0$ denotes the zero $N\ell$-tuple.) By the previous Lemma we known $f^{\geq s}\in V$ for all $s \in \N_0$. We claim $f^{\geq s}\rightarrow c_0$ 
as $s \rightarrow \infty$. Given $\epsilon>0$ we must find an $S$ such that $\forall I$:
$$
s \leq v(\lambda_I)< \infty \Longrightarrow |c_I|<\epsilon
$$
provided $s>S$. First pick an $S'$ such that $|I|>S' \Rightarrow |c_I|<\epsilon$, which is possible by rigidity. We will check that any $S$ satisfying
$$
p^S >S' \cdot {\max}_{\alpha \in \Phi^+} \langle w\alpha,\mu\rangle
$$
works. Indeed, suppose $s>S$ and $I$ is a nonzero tuple for which $p^s|\lambda_I$. Then
$$
p^S<p^s \leq \lambda_I=\sum_{r=1}^N \langle w\alpha_r,\mu\rangle |i_r| \leq |I|\cdot {\max}_{\alpha \in \Phi^+} \langle w\alpha,\mu\rangle
$$
and thus $|I|>S'$ by choice of $S$. This finishes the proof. 
\end{proof}

To summarize, every nonzero closed $T_0^1U_w^+$-invariant subspace of the rigid induction $\Ind_{U_w^-T^1}^I(w\chi)$ contains all the functions which are constant on $U_w^+$. In other words every such subspace must contain the function $\tilde{f}_0$ which extends the constant function $f_0=1$ on $U_w^+$. Note that $H\tilde{f}_0=d(w\chi)(H)\tilde{f}_0$ for all $H \in \Lie(T_0^1)$. In fact this holds for all $H \in \Lie(T^1)$ since any $t \in T^1$ acts by $(t\tilde{f}_0)(u)=(w\chi)(t)$. 
This observation will be used shortly in the next section where we address the $U_w^-$-action.

\section{Verma modules and irreducibility}\label{verma}

We now take into account the $U_w^-$-action which does not have a nice explicit description on $C^{\rig}(U_w^+,K)$. First we introduce some more notation. All our groups will be viewed as locally $\Q_p$-analytic groups:
\begin{itemize}
\item $\g=\Lie(I)\otimes_{\Q_p}K$
\item $\frak{t}=\Lie(T^1)\otimes_{\Q_p}K$
\item $\frak{t}_0=\Lie(T_0^1)\otimes_{\Q_p}K$
\item $\frak{u}_w^{\pm}=\Lie(U_w^{\pm})\otimes_{\Q_p}K$
\item $\frak{b}_w^{\pm}=\frak{t}\oplus \frak{u}_w^{\pm}$
\end{itemize}
We assume $K \supset L$. In particular this implies $\g_{\text{der}}$ is split in the sense of \cite[Sect.~1.10.1]{Dix96}. 

We view $d(w\chi)$ as a linear form 
$\frak{t}\rightarrow K$ and extend it to $\frak{b}_w^+$ by declaring it to be $0$ on $\frak{u}_w^+$. We denote the resulting $U(\frak{b}_w^+)$-module by $K_{d(w\chi)}$ and consider the Verma module 
$$
\mathcal{V}_{d(w\chi)}=U(\g)\otimes_{U(\frak{b}_w^+)}K_{d(w\chi)}.
$$
The rigid functions $C^{\rig}(I,K)$ form a $\g$-module. The map $X \mapsto X\tilde{f}_0$ factors through the Verma module and gives a nonzero $U(\g)$-linear map
$$
\psi: \mathcal{V}_{d(w\chi)} \longrightarrow \Ind_{U_w^-T^1}^I(w\chi).
$$
Indeed $H\tilde{f}_0=d(w\chi)(H)\tilde{f}_0$ for all $H \in \frak{t}$, and $X\tilde{f}_0=0$ for $X \in \frak{u}_w^+$ (as we are differentiating a constant function on $U_w^+$). 
This is a major contrast between this argument and the one in \cite{OS10} say. In the locally analytic setup one maps the Verma module to the {\it{dual}} of the induction, whereas $\psi$ maps $\mathcal{V}_{d(w\chi)}$ to the actual induction.

Since $\mathcal{V}_{d(w\chi)}=\bigoplus_{\lambda \in \frak{t}_0^*}\mathcal{V}_{d(w\chi)}[\lambda]$ is a sum of weight spaces the map $\psi$ clearly takes values in the subspace 
$\Ind_{U_w^-T^1}^I(w\chi)_{\frak{t}_0-\text{fin}}$ of $\frak{t}_0$-finite vectors (those $f$ for which $\text{span}_K\{Hf:H \in \frak{t}_0\}$ is finite-dimensional). 

\begin{lem}
The restriction map $f \mapsto f|_{U_w^+}$ gives an isomorphism of vector spaces
$$
\Ind_{U_w^-T^1}^I(w\chi)_{\frak{t}_0-\text{fin}} \overset{\sim}{\longrightarrow}C^{\alg}(U_w^+,K).
$$
(The right-hand side denotes the dense subspace of polynomial functions: $f(z_1,\ldots,z_N)=\sum_{I} c_Iz^I$ with $c_I=0$ for $|I|>\!\!>0$.)
\end{lem}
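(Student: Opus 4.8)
The plan is to exhibit the inverse of the restriction map and show both directions land where claimed. Recall from Lemma \ref{restrict} that restriction $f \mapsto f|_{U_w^+}$ already gives a linear isomorphism $\Ind_{U_w^-T^1}^I(w\chi) \overset{\sim}{\longrightarrow} \CC^{\rig}(U_w^+,K)$, so the content here is purely the identification of the two dense subspaces sitting inside. Concretely, I would first check the forward inclusion: if $f$ is $\frak{t}_0$-finite, then in the coordinates $z^I$ the formula of Lemma \ref{eigenvalue} shows that each monomial $z^I$ appearing in $f|_{U_w^+}$ is an $H_\mu$-eigenvector with eigenvalue $d(w\chi)(H_\mu) - \lambda_I$, where $\lambda_I = \sum_r \langle w\alpha_r,\mu\rangle |i_r|$ ranges over a set of integers that tends to $\infty$ with $|I|$ (since all $\langle w\alpha_r,\mu\rangle > 0$ by the choice of $\mu$ adapted to $w$). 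Hence the $H_\mu$-eigenvalues occurring in $f$ are confined to a finite set precisely when only finitely many $z^I$ occur, i.e. when $f|_{U_w^+}$ is a polynomial; so $\frak{t}_0$-finiteness of $f$ forces $f|_{U_w^+} \in C^{\alg}(U_w^+,K)$. (One should note a single operator $H_\mu$ already detects finiteness, since $\frak{t}_0$-finiteness implies in particular $H_\mu$-finiteness, and the eigenvalue spread along $H_\mu$ already isolates $|I|$.)

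For the reverse inclusion, I would observe that $C^{\alg}(U_w^+,K)$ is spanned by the monomials $f_I = z^I$, and that the preimage $\tilde f_I \in \Ind_{U_w^-T^1}^I(w\chi)$ of $f_I$ under the isomorphism of Lemma \ref{restrict} is $\frak{t}_0$-finite: indeed $\text{span}_K\{\tilde f_J : |J| = |I|\}$ is a finite-dimensional $\frak{t}_0$-stable subspace containing $\tilde f_I$, because the $T_0^1$-action $(tf)(z_1,\dots,z_N) = (w\chi)(t) f((w\alpha_1)(t^{-1})z_1,\dots)$ preserves total degree and hence the (finite-dimensional) space of polynomials of each fixed multidegree, and differentiating preserves this. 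So every monomial preimage is $\frak{t}_0$-finite, and since $\frak{t}_0$-finite vectors form a linear subspace, all of $C^{\alg}(U_w^+,K)$ lifts into $\Ind_{U_w^-T^1}^I(w\chi)_{\frak{t}_0-\text{fin}}$. Combined with the forward inclusion and the fact that restriction is already bijective on the ambient spaces, this shows restriction restricts to a bijection between the two subspaces, and it is $K$-linear by construction.

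Finally I would remark, for the parenthetical claim, that $C^{\alg}(U_w^+,K)$ is dense in $\CC^{\rig}(U_w^+,K)$ for the Gauss norm: any rigid function $\sum_I c_I z^I$ with $c_I \to 0$ is the limit of its polynomial truncations, whose Gauss-norm distance to $f$ is $\sup_{|I| > M}|c_I| \to 0$.

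The only genuinely delicate point is the forward direction — making sure that $\frak{t}_0$-finiteness really does force polynomiality rather than merely constraining the eigenvalues modulo the kernel of $d(w\chi)$. This is handled by using the cocharacter $\mu$ adapted to $w$: since $\langle w\alpha_r,\mu\rangle > 0$ strictly for every $r$, the map $I \mapsto \lambda_I$ has finite fibres, so a bound on the set of $H_\mu$-eigenvalues occurring in $f$ translates into a bound on $|I|$, hence on the support of the power series expansion of $f|_{U_w^+}$. Everything else is bookkeeping with the explicit $T_0^1$-action and the fact that it is degree-preserving.
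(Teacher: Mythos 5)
Your proof is correct and takes essentially the same approach as the paper: both hinge on the facts that the monomials $f_I$ are $\frak{t}_0$-eigenvectors and that, because $\mu$ is adapted to $w$ (all $\langle w\alpha_r,\mu\rangle > 0$), the map $I \mapsto \lambda_I$ has finite fibres, so a $\frak{t}_0$-finite vector can have only finitely many nonzero coefficients. The only cosmetic difference is that the paper first reduces to a weight vector $f \in W[\lambda]$ via the weight space decomposition of the finite-dimensional $\frak{t}_0$-module $W$ generated by $f$, whereas you work directly with $H_\mu$-finiteness (implicitly via a minimal-polynomial argument) -- both variants rest on the same underlying fact that the ambient $\frak{t}_0$-action is diagonal in the $f_I$-basis.
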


\begin{proof}
Let $f \in \Ind_{U_w^-T^1}^I(w\chi)_{\frak{t}_0-\text{fin}}$ and consider the finite-dimensional space $W=\text{span}_K\{Hf:H \in \frak{t}_0\}$. Decompose it into 
weight spaces $W=\bigoplus_{\lambda \in \frak{t}_0^*}W[\lambda]$, after possibly enlarging $K$. In order to show $f|_{U_w^+}$ is a polynomial function we may assume 
$f \in W[\lambda]$ for some $\lambda \in \frak{t}_0^*$. Expanding $f$ as $\sum_{I} c_If_I$ and applying $H \in \Lie(T_0^1)$ to it gives 
$$
\lambda(H)f=Hf=\sum_I c_I \big(d(w\chi)(H)-\sum_{r=1}^N d(w\alpha_r)(H) |i_r|\big) f_I.
$$
Comparing coefficients shows $c_I \neq 0 \Rightarrow \lambda(H)=d(w\chi)(H)-\sum_{r=1}^N d(w\alpha_r)(H) |i_r|$. For $H=H_{\mu}$ this last equation reads 
$$
\lambda(H_{\mu})-d(w\chi)(H_{\mu})=\lambda_I=\sum_{r=1}^N \langle w\alpha_r,\mu\rangle |i_r|.
$$
There are only finitely many tuples $I$ with this property. 

The map is obviously surjective: A polynomial function belongs to $C^{\rig}(U_w^+,K)_{\frak{t}_0-\text{fin}}$ since the $f_I$ are eigenfunctions for $\Lie(T_0^1)$.
\end{proof}

As a byproduct $\Ind_{U_w^-T^1}^I(w\chi)_{\frak{t}_0-\text{fin}}=\bigoplus_{\lambda \in \frak{t}_0^*}\Ind_{U_w^-T^1}^I(w\chi)_{\frak{t}_0-\text{fin}}[\lambda]$ decomposes as a direct sum of weight spaces, and 
$$
\dim_K \Ind_{U_w^-T^1}^I(w\chi)_{\frak{t}_0-\text{fin}}[\lambda]=\#\{I: d(w\chi) - \lambda=\sum_{r=1}^N d(w\alpha_r) |i_r|\}=\dim_K \mathcal{V}_{d(w\chi)}[\lambda].
$$
The last equality is part (ii) of \cite[Prop.~7.1.6, p. 233]{Dix96} applied to $w\Phi^+$ in place of $\Phi^+$, or rather a variant of the proof: As a vector space 
$\frak{g}=\frak{u}_w^-\oplus \frak{b}_w^+$ where $\frak{u}_w^-=\bigoplus_{r=1}^N \Lie(I \cap U_{-w\alpha_r})\otimes_{\Q_p}K$. In this decomposition each space $\Lie(I \cap U_{-w\alpha_r})$ is $\ell$-dimensional over $\Q_p$ and $\frak{t}_0$ acts on it via $-d(w\alpha_r)$. Inspecting the $\frak{t}_0$-action on a PBW basis for 
$\mathcal{V}_{d(w\chi)}\simeq U(\frak{u}_w^-)$ gives the result as in \cite{Dix96}.

\begin{prop}
If $\mathcal{V}_{d(w\chi)}$ is a simple $U(\g)$-module then $\psi$ is an isomorphism of $U(\g)$-modules
$$
\mathcal{V}_{d(w\chi)} \overset{\sim}{\longrightarrow} \Ind_{U_w^-T^1}^I(w\chi)_{\frak{t}_0-\text{fin}}
$$
and $\Ind_{U_w^-T^1}^I(w\chi)$ is topologically irreducible. Conversely, if $\Ind_{U_w^-T^1}^I(w\chi)$ is topologically irreducible the Verma module $\mathcal{V}_{d(w\chi)}$ is necessarily simple.  
\end{prop}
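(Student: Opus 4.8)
The plan is to prove the two implications separately, using the $U(\g)$-linear map $\psi:\mathcal{V}_{d(w\chi)}\to\Ind_{U_w^-T^1}^I(w\chi)$ together with the previously established facts: $\psi$ is nonzero, its image lands in the $\frak{t}_0$-finite subspace, that subspace is $\bigoplus_\lambda C^{\alg}$-identified with polynomials on $U_w^+$ with weight multiplicities matching those of the Verma module, and the key input from the preceding section that every nonzero closed $T_0^1 U_w^+$-invariant subspace of the induction contains $\tilde f_0$.

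\emph{Assume $\mathcal{V}_{d(w\chi)}$ is simple.} First I would note that $\psi$ is injective: its kernel is a proper $U(\g)$-submodule (proper since $\psi\neq 0$), hence zero by simplicity. Next, $\psi$ is surjective onto $\Ind_{U_w^-T^1}^I(w\chi)_{\frak{t}_0-\text{fin}}$: the image is a $U(\g)$-submodule of the target, it is nonzero, and comparing the weight-space dimensions computed just above the Proposition (which agree with $\dim_K\mathcal{V}_{d(w\chi)}[\lambda]$) forces equality weight space by weight space, since $\psi$ is already injective. So $\psi$ is a $U(\g)$-isomorphism onto $\Ind_{U_w^-T^1}^I(w\chi)_{\frak{t}_0-\text{fin}}$. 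For topological irreducibility, let $0\neq V\subseteq \Ind_{U_w^-T^1}^I(w\chi)$ be a closed $I$-invariant subspace; I want $V$ to be everything. Since $V$ is $I$-stable it is $T_0^1U_w^+$-stable, so by the Proposition at the end of Section \ref{slp} it contains $\tilde f_0$. But $V$ is also stable under $\g=\Lie(I)\otimes_{\Q_p}K$ (differentiating the $I$-action, using that the $\g$-action on $C^{\rig}(I,K)$ is by continuous operators and $V$ is closed), so $V$ contains $U(\g)\tilde f_0=\im(\psi)=\Ind_{U_w^-T^1}^I(w\chi)_{\frak{t}_0-\text{fin}}$, which by the Lemma identifying this with $C^{\alg}(U_w^+,K)$ is dense in $\CC^{\rig}(U_w^+,K)\cong\Ind_{U_w^-T^1}^I(w\chi)$. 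As $V$ is closed, $V$ is the whole space.

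\emph{Conversely, assume $\Ind_{U_w^-T^1}^I(w\chi)$ is topologically irreducible.} Suppose $\mathcal{V}_{d(w\chi)}$ is not simple; pick a proper nonzero submodule $M\subsetneq\mathcal{V}_{d(w\chi)}$. Since every nonzero submodule of a Verma module contains the highest weight line (over a split reductive Lie algebra over a field of characteristic zero, a nonzero submodule meets $\mathcal{V}_{d(w\chi)}[\,d(w\chi)\,]$ nontrivially — cf. the weight decomposition and \cite[Sect.~7.1]{Dix96}), one could instead take $M$ to be a \emph{maximal} proper submodule, so $\mathcal{V}_{d(w\chi)}/M$ is simple and of dimension strictly smaller (as a sum of weight spaces) than $\mathcal{V}_{d(w\chi)}$. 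Then $\psi(M)$ is a nonzero $\g$-submodule of $\Ind_{U_w^-T^1}^I(w\chi)_{\frak{t}_0-\text{fin}}$ whose closure $\overline{\psi(M)}$ is a closed $I$-invariant subspace: it is $\g$-stable, and one upgrades $\g$-stability of a closed subspace to $I$-stability because $I$ is (topologically) generated by one-parameter subgroups $s\mapsto e^{sX}$ whose action on the closed subspace is computed by $\exp$ of the $\g$-action. By irreducibility $\overline{\psi(M)}$ is $0$ or everything; it is not $0$ since $\psi$ is injective on the submodule generated by a highest weight vector of $M$ (alternatively, $\psi|_M\neq 0$ because $\psi\neq0$ and $M$ contains a highest weight vector on which $\psi$ does not vanish — this needs a small check). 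If $\overline{\psi(M)}$ is everything, then intersecting with the $\frak{t}_0$-finite part gives $\psi(M)=\Ind_{U_w^-T^1}^I(w\chi)_{\frak{t}_0-\text{fin}}$, contradicting that $\psi(\mathcal{V}_{d(w\chi)})=\Ind_{U_w^-T^1}^I(w\chi)_{\frak{t}_0-\text{fin}}$ has strictly larger weight-space dimensions than $\psi(M)$ would allow given $M\subsetneq\mathcal{V}_{d(w\chi)}$.

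The main obstacle, I expect, is the passage between $\g$-stability and $I$-stability for closed subspaces (both directions use it), and more delicately the converse direction: controlling $\overline{\psi(M)}\cap\Ind_{U_w^-T^1}^I(w\chi)_{\frak{t}_0-\text{fin}}$ versus $\psi(M)$ itself — a priori the closure could acquire new $\frak{t}_0$-finite vectors. One clean way around this is to observe that $\Ind_{U_w^-T^1}^I(w\chi)_{\frak{t}_0-\text{fin}}=\bigoplus_\lambda(\text{finite-dim'l weight spaces})$, each weight space is closed, and intersecting a closed subspace with a fixed finite-dimensional weight space is automatic; so $\overline{\psi(M)}\cap(\text{weight }\lambda)$ is a closed, hence arbitrary, subspace of a finite-dimensional space, and $\g$-stability of $\overline{\psi(M)}$ plus the weight-space dimension count pins it down. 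I would want to double-check that this matches exactly the dimension bookkeeping recorded just before the Proposition, but no new ideas beyond what is already in the excerpt should be required.
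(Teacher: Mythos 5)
The forward direction of your proof is essentially the paper's: injectivity of $\psi$ from simplicity, surjectivity from the weight-space dimension count already recorded before the Proposition, then topological irreducibility from $\tilde f_0\in V$ (using the section's final proposition) plus closure under $U(\frak{u}_w^-)$ and density of $C^{\alg}(U_w^+,K)$. That part is fine.

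The converse direction has two real problems. First, the assertion that ``every nonzero submodule of a Verma module contains the highest weight line'' is false: for $\mathfrak{sl}_2$ and a dominant integral $\lambda$, the maximal proper submodule of $M(\lambda)$ is generated by $f^{\lambda+1}v_\lambda$ and does \emph{not} contain $v_\lambda$. You appear to disregard this claim immediately, but you then rely on ``$\psi$ is injective on the submodule generated by a highest weight vector of $M$'' to rule out $\overline{\psi(M)}=0$, and you invoke ``$\psi(\mathcal{V}_{d(w\chi)})=\Ind_{U_w^-T^1}^I(w\chi)_{\frak{t}_0-\text{fin}}$'' at the end — neither is available until one has actually shown $\psi$ is injective (resp. surjective), which is precisely what must be proved in this direction. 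The paper avoids this circularity by establishing bijectivity of $\psi$ \emph{first} (from irreducibility) and only then testing a proper submodule $\mathcal{E}\subsetneq\mathcal{V}_{d(w\chi)}$.

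Second, and more fundamentally, you correctly identify the crux — that passing to the closure $\overline{\psi(M)}$ could a priori add new $\frak{t}_0$-finite vectors, so that $\overline{\psi(M)}\cap C^{\alg}(U_w^+,K)$ might strictly contain $\psi(M)$ — but your proposed fix does not resolve it. Saying each weight space is finite-dimensional and closed only tells you $\overline{\psi(M)}[\lambda]$ is a subspace of a finite-dimensional space; it does not tell you $\overline{\psi(M)}[\lambda]=\psi(M)[\lambda]$. The paper proves exactly this as a standalone assertion: for any $U(\frak{t})$-submodule $E\subset C^{\alg}(U_w^+,K)$ one has $E=\bar E\cap C^{\alg}(U_w^+,K)$, by decomposing $E=\bigoplus_\lambda E[\lambda]$ (semisimplicity under $\frak{t}_0$), using that the weight projections $\text{pr}_\lambda:C^{\alg}\to C^{\alg}[\lambda]$ are continuous for the Gauss norm (the $f_I$ are orthonormal eigenfunctions), and noting each $E[\lambda]$ is finite-dimensional hence closed. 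This gives the implication $E\subsetneq C^{\alg}\Rightarrow\bar E\subsetneq C^{\rig}$, which applied first to $E=\im(\psi)$ yields bijectivity of $\psi$, and then to $E=\psi(\mathcal{E})$ forces $\mathcal{E}=0$. Without a proof of that lemma (or something equivalent), your converse remains incomplete.
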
 

\begin{proof}
The map $\psi$ is nonzero and $\g$-linear, therefore injective by the simplicity of $\mathcal{V}_{d(w\chi)}$. Since the dimensions match up the inclusions
$\mathcal{V}_{d(w\chi)}[\lambda] \hookrightarrow  \Ind_{U_w^-T^1}^I(w\chi)_{\frak{t}_0-\text{fin}}[\lambda]$ are bijective for all $\lambda \in \frak{t}_0^*$. Now take the direct sum. 

The irreducibility follows: Suppose $V \subset \Ind_{U_w^-T^1}^I(w\chi)$ is a nonzero closed $I$-invariant subspace. By the $T^1U_w^+$-invariance we know $V$ must contain 
the vector $\tilde{f}_0$, and therefore also all elements $X\tilde{f}_0$ for $X \in U(\frak{u}_w^-)$ by the $U_w^-$-invariance. From what we have just shown these $X\tilde{f}_0$
constitute all of $\Ind_{U_w^-T^1}^I(w\chi)_{\frak{t}_0-\text{fin}}$, which is a dense subspace (the polynomial functions on $U_w^+$ are dense in the space of rigid functions). Since $V$ is closed it must be all of $\Ind_{U_w^-T^1}^I(w\chi)$.

To show the converse we start by making the following observation.  Let $E \subset C^{\alg}(U_w^+,K)$ be an arbitrary $U(\frak{t})$-submodule with closure 
$\bar{E}\subset C^{\rig}(U_w^+,K)$. Then we assert that 
\begin{equation}\label{ray1}
E=\bar{E} \cap C^{\alg}(U_w^+,K).
\end{equation}
To see this suppose $f \in C^{\alg}(U_w^+,K)$ is a limit $f=\lim_{i\rightarrow \infty} f_i$ of functions $f_i \in E$. Let $\lambda \in \frak{t}_0^*$ and consider the projection map 
onto the $\lambda$-weight space $\text{pr}_{\lambda}: C^{\alg}(U_w^+,K) \rightarrow C^{\alg}(U_w^+,K)[\lambda]$. The target $C^{\alg}(U_w^+,K)[\lambda]$ is finite-dimensional, as we have seen above. Since $C^{\alg}(U_w^+,K)$ is topologized by the Gauss norm, and the $f_I$ are eigenfunctions, it is easy to see $\text{pr}_{\lambda}$ is continuous.
Therefore $\text{pr}_{\lambda}(f)=\lim_{i\rightarrow \infty} \text{pr}_{\lambda}(f_i)$. Observe that $E$ is a semisimple $U(\frak{t}_0)$-module since $C^{\alg}(U_w^+,K)$ is, and thus $E$ has a weight space decomposition $E=\bigoplus_{\lambda \in \frak{t}_0^*}E[\lambda]$. Hence all the terms $\text{pr}_{\lambda}(f_i)$ lie in $E[\lambda]$. Since $\dim_K E[\lambda]<\infty$ so does the limit $\text{pr}_{\lambda}(f)$ ($E[\lambda]$ is complete and therefore closed). Consequently $f=\sum_{\lambda} \text{pr}_{\lambda}(f) \in E$. This shows our assertion. 
In particular
\begin{equation}\label{ray2}
E \subsetneq C^{\alg}(U_w^+,K) \Longrightarrow \bar{E} \subsetneq C^{\rig}(U_w^+,K).
\end{equation}
Now suppose $\Ind_{U_w^-T^1}^I(w\chi)$ is topologically irreducible. Apply the above observation to $\im(\psi)$. Since
$\overline{\im(\psi)}=\Ind_{U_w^-T^1}^I(w\chi)$ we must have $\im(\psi)=\Ind_{U_w^-T^1}^I(w\chi)_{\frak{t}_0-\text{fin}}$. In other words $\psi: \mathcal{V}_{d(w\chi)} \rightarrow \Ind_{U_w^-T^1}^I(w\chi)_{\frak{t}_0-\text{fin}}$ is surjective. 
Again, since $\mathcal{V}_{d(w\chi)}[\lambda] \twoheadrightarrow \Ind_{U_w^-T^1}^I(w\chi)_{\frak{t}_0-\text{fin}}[\lambda]$ must be bijective $\forall \lambda$ as the two spaces have the same finite dimension, 
$\psi$ is bijective as well. Finally, to verify $\mathcal{V}_{d(w\chi)}$ is simple, suppose $\mathcal{E} \subsetneq \mathcal{V}_{d(w\chi)}$ is a $U(\g)$-submodule. Then, by the above implication applied to $E=\psi(\mathcal{E})$, we find that $\overline{\psi(\mathcal{E})} \subsetneq \Ind_{U_w^-T^1}^I(w\chi)$ and so $\overline{\psi(\mathcal{E})}=\{0\}$. Equivalently $\mathcal{E}=\{0\}$ by the bijectivity of $\psi$.  
\end{proof}

\begin{rem}
Assertion (\ref{ray1}) and the implication (\ref{ray2}) in the above proof are analogous to \cite[Lem.~3.13, Cor.~3.14]{Ray20b} respectively. Our proof is more or less just a rewording of the argument given there. 
\end{rem}

As explained in \cite[Cor.~3.4.11]{OS10} for example, $\mathcal{V}_{d(w\chi)}$ is simple for all $w \in W$ if (and only if) $\mathcal{V}_{d\chi}$ is simple. This is a straightforward argument; conjugation by $w$ identifies the Verma modules.

The BGG criterion \cite[Thm.~7.6.24]{Dix96} tells us $\mathcal{V}_{d\chi}$ is simple if and only if $(d\chi+\delta)(H_{\alpha})\notin \{1,2,3,\ldots\}$ 
for all positive roots $\alpha$ relative to the pair $(\frak{g},\frak{t})$. (To aid comparison, in the notation of \cite{Dix96} we have $\mathcal{V}_{d\chi}=M(d\chi+\delta)$.)

\section{Multiplicity one}

Suppose $\mathcal{V}_{d\chi}$ is simple. In this section we note that as $w \in W$ varies the representations $\Ind_{U_w^-T^1}^I(w\chi)$ are non-isomorphic. This relies on the non-existence of a Haar measure on the space of rigid analytic functions on $\Z_p$. We include the standard argument for convenience.

\begin{lem}\label{haar}
There is no nonzero translation-invariant continuous linear form
$C^{\rig}(\Z_p,K)\rightarrow K$.
\end{lem}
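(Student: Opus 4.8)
Suppose, for contradiction, that $\Lambda\colon C^{\rig}(\Z_p,K)\to K$ is a nonzero continuous linear form with $\Lambda(\tau_a f)=\Lambda(f)$ for all $a\in\Z_p$ and all $f$, where $(\tau_a f)(z)=f(z+a)$. The plan is to test $\Lambda$ against the exponential characters $g_T(z)=(1+T)^z$ and thereby force $\Lambda$ to kill every monomial $z^n$, which contradicts $\Lambda\neq 0$ by density of polynomials.

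First I would check that for $T\in p\Z_p$ the function $g_T(z):=\exp\!\big(z\log(1+T)\big)=\sum_{n\geq 0}\tfrac{\ell_T^n}{n!}\,z^n$, with $\ell_T:=\log(1+T)$, genuinely lies in $C^{\rig}(\Z_p,K)$. Indeed $v(\ell_T)=v(T)\geq 1>\tfrac{1}{p-1}$ (this is where $p>2$ enters), so $v(\ell_T^n/n!)\geq n\big(v(\ell_T)-\tfrac{1}{p-1}\big)\to\infty$, giving a rigid power series of Gauss norm $1$. The key identity is the homomorphism property
$$
g_T(z+a)=\exp\!\big((z+a)\ell_T\big)=(1+T)^a\,g_T(z),\qquad\text{i.e.}\qquad \tau_a g_T=(1+T)^a\,g_T .
$$
Taking $a=1$ and applying $\Lambda$ gives $\Lambda(g_T)=(1+T)\,\Lambda(g_T)$, hence $T\,\Lambda(g_T)=0$, so $\Lambda(g_T)=0$ for every $T\in p\Z_p\setminus\{0\}$. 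Now expand using continuity and $\Q_p$-linearity of $\Lambda$: writing $b_n:=\Lambda(z^n)/n!$ we get $0=\Lambda(g_T)=\sum_{n\geq 0}b_n\,\ell_T^{\,n}$. Since $T\mapsto\ell_T$ is a bijection $p\Z_p\xrightarrow{\sim}p\Z_p$ and the power series $h(L):=\sum_{n\geq0}b_nL^n$ converges on $\{v(L)>\tfrac1{p-1}\}$ (because $|b_n|\leq\|\Lambda\|/|n!|$ and $\|z^n\|=1$), we conclude $h$ vanishes on all of $p\Z_p\setminus\{0\}$, hence on $p\Z_p$ by continuity. By the identity principle for convergent power series over a $p$-adic field — divide by $L$ repeatedly, using that $p\Z_p\setminus\{0\}$ is dense in $p\Z_p$ — all $b_n$ vanish, i.e. $\Lambda(z^n)=0$ for every $n\geq 0$.

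Finally, the partial sums of any $f=\sum_n c_nz^n\in C^{\rig}(\Z_p,K)$ converge to $f$ in the Gauss norm (since $c_n\to 0$), so polynomials are dense; continuity of $\Lambda$ then forces $\Lambda=0$, the desired contradiction. The only step requiring care is the first one — that $g_T$ is rigid analytic on the whole closed disk $\Z_p$ rather than merely locally analytic — and this is exactly what the hypothesis $p>2$ (via $v(T)\geq 1>\tfrac1{p-1}$) guarantees; everything else is routine bookkeeping.
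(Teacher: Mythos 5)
Your proof is correct but takes a genuinely different route from the paper's. The paper argues elementarily: writing $f_N(x)=x^N$ and $T$ for the unit translation, it applies the putative form to the binomial identity $Tf_{N+1}=\sum_{i=0}^{N+1}\binom{N+1}{i}f_i$ and uses $\ell(Tf_{N+1})=\ell(f_{N+1})$ to conclude, by strong induction on $N$, that $\ell(f_N)=0$ for all $N$; density of polynomials then finishes. You instead test $\Lambda$ against the one-parameter family of exponential characters $g_T(z)=(1+T)^z$, use the eigenfunction relation $\tau_1 g_T=(1+T)g_T$ to deduce $\Lambda(g_T)=0$ for all $T\in p\Z_p\setminus\{0\}$, and then recover the vanishing of all the moments $\Lambda(z^n)$ from the identity principle applied to the auxiliary power series $h(L)=\sum_n \Lambda(z^n)L^n/n!$. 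The two arguments are dual in spirit: the paper works on the monomial side with purely combinatorial input, while you work on the Amice/Fourier side, and your crucial input is the observation that the characters $g_T$ are genuinely rigid analytic on the whole closed disk precisely because $p>2$ forces $v(\log(1+T))\geq 1>\tfrac{1}{p-1}$. Your version is slightly heavier (it needs the identity principle for convergent power series and the growth estimate $|b_n|\leq\|\Lambda\|/|n!|$), but it makes the role of the convergence radius transparent and explains conceptually why the statement holds: translation invariance forces vanishing on a topologically generating family of characters. Both proofs are complete; yours is valid, the chain of inequalities $v(\ell_T)=v(T)\geq 1>\tfrac{1}{p-1}$ is the only delicate point and you handle it correctly.
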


\begin{proof}
We let $T: C^{\rig}(\Z_p,K) \rightarrow C^{\rig}(\Z_p,K)$ denote the translation operator $(Tf)(x)=f(x+1)$. Suppose $\ell: C^{\rig}(\Z_p,K)\rightarrow K$ is a
translation-invariant form; meaning $\ell(Tf)=\ell(f)$. To show $\ell=0$ we consider the values of $\ell$ on the monomials $f_N(x)=x^N$. We will verify that 
$\ell(f_N)=0$ by strong induction on $N$. The case $N=0$ follows by observing $Tf_1=f_0+f_1$ and applying $\ell$. Now suppose $\ell(f_i)=0$ for all 
$i=0,\ldots,N-1$. Note that
$$
Tf_{N+1}=\sum_{i=0}^{N+1}\binom{N+1}{i}f_i=f_{N+1}+(N+1)f_N+\sum_{i=0}^{N-1}\binom{N+1}{i}f_i.
$$
Applying $\ell$ yields $\ell(Tf_{N+1})=\ell(f_{N+1})+(N+1)\ell(f_N)$ by the induction hypothesis. Since the left-hand side equals 
$\ell(f_{N+1})$ we conclude $\ell(f_N)=0$. By continuity $\ell=0$ on all of $C^{\rig}(\Z_p,K)$.
\end{proof}

Without any simplicity assumption on $\mathcal{V}_{d\chi}$ we have the following result.

\begin{prop}
$\Hom_I^{\text{cont}}\big(\Ind_{U_w^-T^1}^I(w\chi),\Ind_{U_{w'}^-T^1}^I(w'\chi)\big)=0$ for $w \neq w'$. 
\end{prop}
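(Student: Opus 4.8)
The plan is to show that any continuous $I$-equivariant map $\Phi\colon \Ind_{U_w^-T^1}^I(w\chi)\to \Ind_{U_{w'}^-T^1}^I(w'\chi)$ with $w\neq w'$ must be zero, by exploiting the $\frak{t}_0$-weight structure and, in the end, the absence of a Haar-type functional from Lemma \ref{haar}. First I would restrict attention to the $\frak{t}_0$-finite vectors: since $\Phi$ is continuous and $I$-equivariant, it is $\g$-equivariant after differentiating, hence it carries $\frak{t}_0$-finite vectors to $\frak{t}_0$-finite vectors and preserves the weight space decomposition, so $\Phi$ maps $\Ind_{U_w^-T^1}^I(w\chi)_{\frak{t}_0\text{-fin}}[\lambda]$ into $\Ind_{U_{w'}^-T^1}^I(w'\chi)_{\frak{t}_0\text{-fin}}[\lambda]$ for each $\lambda\in\frak{t}_0^*$. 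By density of the $\frak{t}_0$-finite vectors it suffices to show $\Phi$ vanishes there. The key point is a comparison of which weights $\lambda$ actually occur: by the dimension formula established before Proposition (the one computing $\dim_K\Ind_{U_w^-T^1}^I(w\chi)_{\frak{t}_0\text{-fin}}[\lambda]=\dim_K\mathcal{V}_{d(w\chi)}[\lambda]$), the weights occurring in the source are exactly those of the form $d(w\chi)-\sum_r d(w\alpha_r)|i_r|$, i.e. $d(w\chi)$ shifted down by the monoid generated by the $d(w\alpha_r)$; similarly for the target with $w'$.

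The main step is then to rule out a nonzero map on the highest weight vector. Suppose $\Phi\neq 0$; since the $\frak{t}_0$-finite vectors generate a simple-ish picture via $\psi$, I would argue $\Phi$ is determined by where it sends $\tilde f_0$. Concretely, $\tilde f_0$ spans the weight space $\Ind_{U_w^-T^1}^I(w\chi)_{\frak{t}_0\text{-fin}}[d(w\chi)]$ (it is the unique weight not obtained by a nontrivial downward shift), so $\Phi(\tilde f_0)$ must live in $\Ind_{U_{w'}^-T^1}^I(w'\chi)_{\frak{t}_0\text{-fin}}[d(w\chi)]$. If $d(w\chi)$ does not occur among the weights $d(w'\chi)-\sum_r d(w'\alpha_r)|i_r|$, that space is zero and $\Phi(\tilde f_0)=0$; combined with the fact that $\tilde f_0$ generates the source topologically under $U(\g)$ and $I$ (via the $U_w^-$-action together with the $T_0^1U_w^+$-invariance argument of the previous section, which shows every closed invariant subspace containing $\tilde f_0$ is everything — applied here to $\ker\Phi$), this forces $\Phi=0$. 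The delicate case is when $d(w\chi)$ \emph{does} occur in the target's weight support, i.e. $d(w\chi)=d(w'\chi)-\sum_r d(w'\alpha_r)|i_r|$ for some nonzero tuple; here $\Phi(\tilde f_0)$ could be a nonzero polynomial function $g$ on $U_{w'}^+$ of positive degree. But $X\tilde f_0=0$ for all $X\in\frak{u}_w^+$, so equivariance forces $Xg=0$ for the images of those operators; tracing through, $g$ would have to be annihilated by a large collection of the derivation operators $z_{rs}\partial/\partial z_{rs}$-type expressions coming from $\frak{u}_{w}^+\cap$(target side), which — together with the transformation law under $U_{w'}^-T^1$ — I expect pins $g$ down to force a translation-invariant functional on some $C^{\rig}(\Z_p,K)$ factor, contradicting Lemma \ref{haar}.

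I expect the main obstacle to be precisely this last case: controlling $\Phi(\tilde f_0)$ when the two characters $d(w\chi)$ and $d(w'\chi)$ are "linked" by the root monoids. One clean way to package it is: the operator $d(w'\chi)(H_\mu)\Id-H_{\mu}$ (for $\mu$ adapted to $w'$) acts on the target with eigenvalue $\lambda_I\in\N_0$ on $f_I$, with $\lambda_I=0$ only for $I=0$; pulling back along $\Phi$, the corresponding operator on the source has a vector of eigenvalue equal to $\langle w'\chi\cdot(\text{shift}),\mu\rangle$ matched against the source's spectrum, and comparing the two spectra (which are lattices of shifts by the two different root systems $w\Phi^+$ vs $w'\Phi^+$) should show the relevant weight spaces cannot be simultaneously matched in a $\g$-equivariant way unless $w=w'$ — the discrepancy ultimately producing, on a one-parameter root subgroup, a nonzero translation-invariant linear form, which Lemma \ref{haar} forbids. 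Once $\Phi(\tilde f_0)=0$ is established, the cyclicity of $\tilde f_0$ (closed $I$-invariant subspaces containing it are everything) finishes it: $\ker\Phi$ is a closed $I$-invariant subspace containing $\tilde f_0$, hence all of the source, so $\Phi=0$.
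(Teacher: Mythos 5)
Your proposal takes a genuinely different route from the paper's — you argue via the weight-space structure of the $\frak t_0$-finite vectors and the highest weight vector $\tilde f_0$, whereas the paper simply applies Frobenius reciprocity for rigid induction. The paper reduces the claim to showing
$\Hom_{U_{w'}^-T^1}^{\text{cont}}\big(\Ind_{U_w^-T^1}^I(w\chi),K_{w'\chi}\big)=0$,
identifies the source with $C^{\rig}(U_w^+,K)$, and then observes that $w\neq w'$ forces $w\Phi^+\cap w'\Phi^-\neq\varnothing$; picking $\beta$ in this intersection gives a root subgroup $I\cap U_\beta\subset U_w^+\cap U_{w'}^-$ acting by translation on one factor of $C^{\rig}(U_w^+,K)$, and Lemma \ref{haar} kills any invariant functional. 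That argument is short, concrete, and requires no simplicity hypothesis.

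Your approach, however, has two genuine gaps. First, the ``delicate case'' — when $d(w\chi)$ lies in the shifted weight lattice $d(w'\chi)-\sum_r d(w'\alpha_r)|i_r|$ of the target — is not actually proved; you write that you \emph{expect} the equivariance constraints on $g=\Phi(\tilde f_0)$ to produce a translation-invariant functional, but you do not carry this out. In fact this linkage situation is exactly where the difficulty lives, and it cannot be dismissed, because for generic $\chi$ with $w\neq w'$ the weight $d(w\chi)$ can very well fall in the target's support (the two supports are cosets of the same lattice shifted by $d(w\chi)$ and $d(w'\chi)$, which differ by a sum of roots). Second, the concluding cyclicity step is wrong as stated: the Proposition in the preceding section shows that every \emph{nonzero} closed $T_0^1U_w^+$-invariant subspace contains $\tilde f_0$ — i.e.\ $\tilde f_0\notin V\Rightarrow V=0$. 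You need the converse, that any closed $I$-invariant subspace \emph{containing} $\tilde f_0$ is everything, applied to $\ker\Phi$. That converse is exactly cyclicity of $\tilde f_0$, which in the paper is established only \emph{under the simplicity assumption} on the Verma module (via surjectivity of $\psi$ onto the $\frak t_0$-finite vectors). The present proposition is explicitly stated \emph{without} any simplicity assumption, so you cannot invoke cyclicity here. The Frobenius reciprocity argument sidesteps both issues entirely.
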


\begin{proof}
Rigid induction is easily checked to satisfy Frobenius reciprocity for globally analytic representations (one of the adjunction maps is given by the orbit map which is rigid analytic). 
So we have to show that 
$$
\Hom_{U_{w'}^-T^1}^{\text{cont}}\big(\Ind_{U_w^-T^1}^I(w\chi), K_{w'\chi}\big)=0.
$$
We identify $\Ind_{U_w^-T^1}^I(w\chi)$ with $C^{\rig}(U_w^+,K)$ as a $U_w^+$-representation. It suffices to show there is no nonzero $K$-linear form 
$\ell: C^{\rig}(U_w^+,K)\rightarrow K$ such that $\ell(uf)=\ell(f)$ for all $u \in U_w^+ \cap U_{w'}^-$. Note that this intersection is non-trivial since 
$w\Phi^+\cap w'\Phi^-\neq \varnothing$. Indeed $w^{-1}w'\neq 1$ has length $l(w^{-1}w')=|\Phi^+\cap w^{-1}w'\Phi^-|>0$. Pick a $\beta \in w\Phi^+\cap w'\Phi^-$
and consider the subgroup $I \cap U_{\beta}\subset U_w^+ \cap U_{w'}^-$. By ordering $\Phi^+$ appropriately we may factor $U_w^+$ as 
$U_w^+=Y\times (I \cap U_{\beta})$ and promote this to the level of rigid spaces as in remark \ref{promote}. Thus $C^{\rig}(U_w^+,K)\simeq C^{\rig}(Y,K)\hat{\otimes}_K C^{\rig}(I \cap U_{\beta},K)$ as representations of $I \cap U_{\beta}$. (The action on $C^{\rig}(Y,K)$ is trivial.)
This reduces the problem to checking there is no nonzero $I \cap U_{\beta}$-invariant linear form 
$C^{\rig}(I \cap U_{\beta},K)\rightarrow K$. Since $I \cap U_{\beta}\simeq \Z_p^{\ell}$ we just have to check there is no nonzero translation-invariant form
$C^{\rig}(\Z_p^{\ell},K)\rightarrow K$. This easily reduces to the case $\ell=1$ which is Lemma \ref{haar}.
\end{proof}

The previous argument is an adaptation of the proof of \cite[Prop.~3.5.1]{OS10} which they in turn partially attribute to Frommer. 

\section{Principal series}\label{ps}

To add some finishing strokes to the proof of our main theorem, suppose $\chi: T\rightarrow K^{\times}$ is a continuous character and inflate it to $B^-=\prod_{\alpha\in \Phi^-}U_{\alpha}\times T$. Consider the continuous induction
$$
\Ind_{B^-}^G(\chi)_{\text{cont}}=\{f \in C(G,K): f(bg)=\chi(b)f(g)\y \forall b \in B^-\}.
$$
By the Bruhat-Tits decomposition $G=\bigsqcup_{w\in W}B^-wJ$ it decomposes as a $J$-representation into
$$
\Ind_{B^-}^G(\chi)_{\text{cont}} \overset{\sim}{\longrightarrow} \bigoplus_{w\in W}\Ind_{J \cap wB^-w^{-1}}^J(w\chi)_{\text{cont}}
$$
by sending $f \mapsto (f_w)_{w\in W}$ where $f_w(g)=f(w^{-1}g)$. Since $J=T^0I$ and $T^1=T^0\cap I$ further restriction gives an isomorphism of 
$I$-representations 
$$
\Ind_{J \cap wB^-w^{-1}}^J(w\chi)_{\text{cont}} \overset{\sim}{\longrightarrow}
\Ind_{I \cap wB^-w^{-1}}^I(w\chi)_{\text{cont}}.
$$
The space of rigid vectors (or as in Emerton's terminology $\Bbb{I}$-analytic vectors, cf. \cite[Df.~3.3.1]{Eme17}) of the right-hand side coincides with the rigid induction
$\Ind_{U_w^-T^1}^I(w\chi)$ from above. This latter observation is a consequence of \cite[Prop.~3.3.7]{Eme17}. Our space $\CC^{\rig}(I,K)$ corresponds to $\CC^{\text{an}}(\Bbb{I},K)$
in his notation. 

Altogether this proves the main theorem \ref{main} in the introduction.

\section{The example $\Sp_4(\Q_p)$}\label{symp}

To highlight a concrete new example of our main result, we work out the case of $\Sp_4(\Q_p)$ explicitly. We follow the conventions in \cite{RS07}. Thus 
$$
G=\Sp_4(\Q_p)=\{g \in \GL_4(\Q_p): {^tg}Jg=J\}, \y \y \y
J=\begin{pmatrix} & & & 1 \\ & & 1 & \\ & -1 & & \\ -1 & & &\end{pmatrix}.
$$
We take our Borel subgroup $P$ to consist of all upper triangular matrices in $\Sp_4(\Q_p)$. The maximal torus is the subgroup of all diagonal matrices
$$
T=\{t_{a,b}=\begin{pmatrix} a& & &  \\ & b &  & \\ &  & b^{-1}& \\  & & & a^{-1}\end{pmatrix}: a,b \in \Q_p^{\times}\}.
$$
The Weyl group $W$ is dihedral of order $8$. There are two simple roots; $\Delta=\{\alpha,\beta\}$ where $\alpha(t_{a,b})=\frac{a}{b}$ and 
$\beta(t_{a,b})=b^2$. Then $\Phi^+=\{\alpha,\beta, \alpha+\beta, 2\alpha+\beta\}$. In particular $\theta=2\alpha+\beta$ is the highest root and the Coxeter number is $h=4$. We therefore require that $p>5$ in order to apply our theorem. Moreover, one immediately verifies that $\delta(t_{a,b})=a^2b$. See \cite[Ch.~2]{RS07} for a discussion of the root system. 

Let $\chi: T \rightarrow K^{\times}$ be a character. Write it as $\chi(t_{a,b})=\chi_1(a)\chi_2(b)$ for a pair of characters $\chi_i: \Q_p^{\times}\rightarrow K^{\times}$. We require that their restrictions $1+p\Z_p \rightarrow K^{\times}$ are rigid analytic. This means they can be expanded as
$\chi_i(\exp(px))=\sum_{r\geq 0}\gamma_{i,r}x^r$ for all $x \in \Z_p$. (Here $\gamma_{i,r}\in K$ converge to $0$ as $r \rightarrow \infty$.) If we write $\chi_i(t)=t^{c_i}$ for $t$ close to $1$, where $c_i=(d\chi_i)(1)\in K$, then $\gamma_{i,r}=\frac{1}{r!}p^rc_i^r \rightarrow 0$ precisely when
$$
v_p(c_i)>\frac{1}{p-1}-1=-\frac{p-2}{p-1}.
$$
Here $v_p$ denotes the valuation on $K$ with $v_p(p)=1$. We will assume this in the sequel. 

The BGG criterion for the simplicity of $\mathcal{V}_{d\chi}$ involves $H_{\alpha}\in \frak{t}$ for all positive roots $\alpha$. One can read those off from the coroots given in \cite[p.~42]{RS07}. We list them here for convenience.
$$
H_{\alpha}=\begin{pmatrix} 1 & & & \\ & -1 & & \\ & & 1 & \\ & & & -1\end{pmatrix}
H_{\beta}=\begin{pmatrix} 0 & & & \\ & 1 & & \\ & & -1 & \\ & & & 0\end{pmatrix}
H_{\alpha+\beta}=\begin{pmatrix} 1 & & & \\ & 1 & & \\ & & -1 & \\ & & & -1\end{pmatrix}
H_{2\alpha+\beta}=\begin{pmatrix} 1 & & & \\ & 0 & & \\ & & 0 & \\ & & & -1\end{pmatrix}.
$$
This allows us to make the BGG criterion explicit in terms of $c_1$ and $c_2$. 
\begin{itemize}
\item $(d\chi+\delta)(H_{\alpha})=c_1-c_2+1 \notin \Z_{>0}$;
\item $(d\chi+\delta)(H_{\beta})=c_2+1 \notin \Z_{>0}$;
\item $(d\chi+\delta)(H_{\alpha+\beta})=c_1+c_2+3 \notin \Z_{>0}$;
\item $(d\chi+\delta)(H_{2\alpha+\beta})=2c_1+2 \notin \Z_{>0}$.
\end{itemize}
All four conditions are satisfied precisely when the Verma module $\mathcal{V}_{d\chi}$ is simple. If so, our main result shows that 
$\Ind_P^G(\chi)_{\Bbb{I}-\text{an}}$ breaks up as a direct sum of eight mutually non-isomorphic topologically irreducible representations 
$\Ind_{I \cap wPw^{-1}}^I(w\chi)_{\Bbb{I}-\text{an}}$ parametrized by the Weyl elements $w$. Here the pro-$p$ Iwahori subgroup $I$ consist of symplectic matrices which are lower triangular modulo $p$,
$$
I=\{g \in \Sp_4(\Z_p): g \equiv \begin{pmatrix} 1 & & & \\ * & 1 & & \\ *& *& 1 & \\ *& *& *& 1\end{pmatrix} \text{mod $p$}\}.
$$
When $w=1$ the subgroup $I \cap wPw^{-1}$ consists of upper triangular matrices $\begin{pmatrix} a& *& *& * \\ & b &*  &* \\ &  & b^{-1}&* \\  & & & a^{-1}\end{pmatrix}$
in $\Sp_4(\Z_p)$ with $a,b \in 1+p\Z_p$ and all $\ast$-entries in $p\Z_p$. 


\subsection*{Acknowledgments} 

We are grateful to Matthias Strauch for sharing his expertise with us at various stages of this project. Thanks are also due to Jishnu Ray for some initial guidance and encouragement. 
Thanks to both for extensive feedback on a preliminary draft. 



\bigskip

\noindent {\it{E-mail addresses}}: {\texttt{arlahiri@ucsd.edu}, {\texttt{csorensen@ucsd.edu}}

\noindent {\sc{Department of Mathematics, UCSD, La Jolla, CA, USA.}}

\end{document}